\newtheorem{theorem}{Theorem}[section]
\newtheorem{lemma}[theorem]{Lemma}
\newtheorem{corollary}[theorem]{Corollary}
\theoremstyle{definition}
\theoremstyle{remark}
\newtheorem{remark}[theorem]{Remark}
\numberwithin{equation}{section}
\begin{document}
\title[Sharp Inequalities for the Numerical Radius]{Sharp Inequalities for the Numerical Radius of Block Operator Matrices}
\author[M. Ghaderi Aghideh, M. S. Moslehian, J. Rooin]{M. Ghaderi Aghideh$^{1}$, M. S. Moslehian$^{2}$, and J. Rooin$^{3}$}

\address{$^{1}$ Department of Mathematics, Institute for Advanced Studies in Basic Sciences (IASBS), Zanjan 45137-66731, Iran;\newline Tusi Mathematical Research Group (TMRG), Mashhad, Iran}
\email{m.ghaderiaghideh@iasbs.ac.ir}

\address{$^{2}$ Department of Pure Mathematics, Center of Excellence in Analysis on Algebraic Structures (CEAAS), Ferdowsi University of
Mashhad, P. O. Box 1159, Mashhad 91775, Iran}
\email{moslehian@um.ac.ir, moslehian@member.ams.org}

\address{$^{3}$ Department of Mathematics, Institute for Advanced Studies in Basic Sciences (IASBS), Zanjan 45137-66731, Iran}
\email{rooin@iasbs.ac.ir}

\subjclass[2010]{47A12, 47A63, 47A30}

\keywords{Numerical radius; convexity; mixed Cauchy--Schwarz inequality; polar decomposition.}

\begin{abstract}
In this paper, we present several sharp upper bounds for the numerical radii of the diagonal and off-diagonal parts of
the $ 2 \times 2$ block operator matrix 
$ \begin{bmatrix}
A& B \\
C& D \end{bmatrix} $. Among extensions of some results of Kittaneh et al., it is shown that if 
$ T= \begin{bmatrix}
A& 0 \\
0& D \end{bmatrix}$, and $f$ and $g$ are non-negative continuous functions on $ [0, \infty ) $ such that $ f(t)g(t)=t \ (t \geq 0) $,
then for all non-negative nondecreasing convex functions $h$ on 
$ [0, \infty ) $, we obtain that
\begin{align*}
& h\left( w^r(T)\right) \\ 
& \leq \max \left( 
\left\| \dfrac{1}{p} h\left(f^{pr}(\left| A \right| )\right)+ 
\dfrac{1}{q}h\left(g^{qr}(\left| A^* \right| )\right)\right\|
,\left\|\dfrac{1}{p} h\left( f^{pr}(\left| D \right| )\right)+ 
\dfrac{1}{q}h\left(g^{qr}(\left| D^* \right| )\right)\right\|\right),
\end{align*} 
where  $ p, q > 1 $ with 
$\dfrac{1}{p} + \dfrac{1}{q} =1$, and $ r \min (p, q )\geq 2 $. 
\end{abstract}

\maketitle

\section{Introduction}
Let 
$\left(\mathcal{H}, \left\langle \cdot, \cdot\right\rangle \right)$ be a complex Hilbert space, and let $\mathbb{B}(\mathcal{H})$
denote the $C^*-$algebra of all bounded linear operators on $\mathcal{H}$. The spectral radius and the numerical radius of an operator $ A \in\mathbb{B}(\mathcal{H}) $ are defined by 
$\rho(A)= \sup\left\lbrace \left| \lambda\right| : \lambda \in  {\rm sp}(A) \right\rbrace$ 	and 
\begin{equation*}
w(A)= \sup \left\lbrace \left| \left\langle Ax, x \right\rangle \right|: x \in \mathcal{H}, \ \left\| x\right\|=1 \right\rbrace, 
\end{equation*}
respectively. It is well known that $\rho(A) \leq w(A)$ and $w(\cdot)$ defines a norm on $\mathbb{B}(\mathcal{H})$, which is equivalent to the usual operator norm $ \left\| \cdot \right\| $; more precisely, 
\begin{equation}\label{inequality 1.2}
\dfrac{1}{2} \left\|A \right\| \leq w(A) \leq \left\|A \right\|
\end{equation}
for any 
$ A \in\mathbb{B}(\mathcal{H}) $. The inequalities in (\ref{inequality 1.2}) are sharp; the second inequality becomes an equality, e.g., if $ A $ is normal, while the first one becomes an equality, e.g., if $ A^2 = 0$. \\ 
An important inequality for $ w(A) $ is the power inequality stating that
\begin{equation*}\label{inequality 1.3}
w(A^n) \leq w(A)^n \qquad (n=1, 2, \ldots).
\end{equation*}
The quantity $w(A)$ is useful in the study of perturbation, convergence, and approximation problems. For more information see \cite{1, 4, 3, 13, 16}. \\
Let $ A, B, C,$ and $D $ be in $\mathbb{B}(\mathcal{H})$. 
We call 
$ \begin{bmatrix}
A& 0 \\
0& D \end{bmatrix} $ 
and 
$ \begin{bmatrix}
0& B \\
C& 0 \end{bmatrix} $
the diagonal and off-diagonal parts of the block matrix
$ \begin{bmatrix}
A& B \\
C& D \end{bmatrix}$, respectively.\\
Hirzallah, Kittaneh, and Shebrawi \cite{4} proved that
\begin{equation}\label{inequality 1.4}
w\left(\begin{bmatrix}
0& B \\
C& 0 \end{bmatrix} \right) 
\leq\dfrac{1}{2}\left(\left\|B\right\| + \left\|C \right\|\right),
\end{equation}
for $ B, C \in\mathbb{B}(\mathcal{H})$. Kittaneh \cite{7,8} showed the following precise estimates of 
$ w(A)$:
\begin{equation} \label{inequality 1.5}
w(A) \leq \dfrac{1}{2} \left\| \, \left|A \right| + \left|A^* \right| \, \right\| 
\end{equation}
and
\begin{equation}\label{inequality 1.6}
\dfrac{1}{4} \left\|\,\left| A \right|^2 
+ \left| A^* \right|^2 \right\| \leq w^2(A) \leq \dfrac{1}{2} \left\|\,\left| A \right|^2 
+ \left| A^* \right|^2 \right\|,
\end{equation}
where $ \left| A \right| = (A^* A)^{\frac{1}{2}} $ denotes the absolute value of $ A $. \\
Also, El-Haddad and Kittaneh \cite{12} established that if $ A \in\mathbb{B}(\mathcal{H})$ and $ A= B + i C $ is the Cartesian decomposition of $A$, then
\begin{equation} \label{inequality 1.7}
2^{-\frac{r}{2}- 1}\left\| \left| B+ C \right|^r + \left| B- C \right|^r\right\|
\leq w^r(A) \leq \dfrac{1}{2} 
\left\| \left| B+ C \right|^r + \left| B- C \right|^r\right\|, 
\end{equation}
for all $ r \geq 2$.\\
The purpose of this paper is to present some general inequalities involving powers of the numerical radius for the diagonal and off-diagonal parts of $ 2 \times 2 $ block operator matrices. 
As a consequence, we generalize inequalities 
(\ref{inequality 1.4}), (\ref{inequality 1.5}), and second inequalities in (\ref{inequality 1.6}) and (\ref{inequality 1.7}). 
\section{Inequalities for the off-diagonal part}
To achieve our results, we need the functional calculus (see, e.g. \cite{17}) and the following lemmas.
The first lemma is a consequence of the classical Young and H\"{o}lder inequalities.
\begin{lemma}\cite[p. 100 and 127]{6}
For $ a, b \geq 0$ and $p,q > 1$ such that 
$\dfrac{1}{p} + \dfrac{1}{q} =1$,\\
$ (a) \ \ ab \leq \dfrac{a^{p}}{p}+ \dfrac{b^q}{q}\leq \left( \dfrac{a^{pr}}{p}+ \dfrac{b^{qr}}{q}\right)^{\frac{1}{r}}$ 
for $r \geq 1$,\\
$ (b) \ \ a_1 b_1 +a_2 b_2 + \cdots + a_n b_n \leq \left( a_1^p + a_2^p+ \cdots + a_n^p \right)^{\frac{1}{p}} \left( b_1^q + b_2^q + \cdots + b_n^q \right)^{\frac{1}{q}} $.
\end{lemma}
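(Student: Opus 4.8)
The plan is to treat the two parts separately and extract everything from two elementary convexity facts: the concavity of $\log$ and the convexity of $t\mapsto t^r$ for $r\ge 1$. The whole of part (b) will then follow from the left-hand inequality in part (a) by a normalization (homogeneity) argument, so the only genuine content lies in part (a).

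First I would dispose of part (a). For the left inequality $ab\le \frac{a^p}{p}+\frac{b^q}{q}$, the cases $a=0$ or $b=0$ are immediate, so assume $a,b>0$. Writing $ab=\exp\!\left(\frac1p\log a^p+\frac1q\log b^q\right)$ and using $\frac1p+\frac1q=1$ together with the concavity of $\log$ (equivalently the convexity of $\exp$) applied to the two points $a^p,b^q$ with weights $\frac1p,\frac1q$, I obtain $ab\le \frac1p a^p+\frac1q b^q$. For the right inequality, set $u=a^p$, $v=b^q$, $\lambda=\frac1p$, $\mu=\frac1q$, so that $\lambda+\mu=1$. Since $t\mapsto t^r$ is convex on $[0,\infty)$ for $r\ge 1$, Jensen's inequality for the two-point distribution $\lambda\delta_u+\mu\delta_v$ gives $(\lambda u+\mu v)^r\le \lambda u^r+\mu v^r$; taking $r$-th roots (a monotone operation) yields exactly $\frac{a^p}{p}+\frac{b^q}{q}\le \left(\frac{a^{pr}}{p}+\frac{b^{qr}}{q}\right)^{1/r}$.

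For part (b) I would normalize. Put $P=\left(\sum_{i=1}^n a_i^p\right)^{1/p}$ and $Q=\left(\sum_{i=1}^n b_i^q\right)^{1/q}$; if $P=0$ or $Q=0$ then every relevant term vanishes and the inequality is trivial, so assume $P,Q>0$. Applying the Young inequality from part (a) to each pair $\bigl(\tfrac{a_i}{P},\tfrac{b_i}{Q}\bigr)$ and summing over $i$ gives $\sum_i \frac{a_i}{P}\frac{b_i}{Q}\le \frac1p\sum_i \frac{a_i^p}{P^p}+\frac1q\sum_i \frac{b_i^q}{Q^q}=\frac1p+\frac1q=1$, and multiplying through by $PQ$ produces the claimed H\"older inequality.

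I do not expect a real obstacle, since both parts are classical; the only points requiring a little care are the degenerate cases ($a=0$, $b=0$, or a vanishing sum), the correct bookkeeping of the weights $\tfrac1p,\tfrac1q$ (which must sum to $1$, exactly the conjugacy hypothesis), and the observation that all the remaining steps---exponentiation, extracting $r$-th roots, and multiplying by $PQ$---are monotone and hence preserve the inequalities. An alternative for the left inequality in (a) that avoids logarithms is to fix $b$ and minimize $\varphi(a)=\frac{a^p}{p}+\frac{b^q}{q}-ab$ over $a\ge 0$: here $\varphi'(a)=a^{p-1}-b=0$ gives $a=b^{q-1}$, and since $\varphi''\ge 0$ this critical point is a minimum at which $\varphi=0$, yielding $ab\le \frac{a^p}{p}+\frac{b^q}{q}$.
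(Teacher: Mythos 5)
Your proposal is correct and complete: the weighted AM--GM (convexity of $\exp$) argument for $ab\le \frac{a^p}{p}+\frac{b^q}{q}$, the power-mean step $(\lambda u+\mu v)^r\le \lambda u^r+\mu v^r$ for the right-hand inequality in (a), and the normalization reduction of (b) to Young's inequality are all sound, and the degenerate cases are handled. Note that the paper itself offers no proof of this lemma --- it is quoted from Mitrinovi\'c--Pe\v{c}ari\'c--Fink \cite[p.~100 and 127]{6} as a known consequence of the classical Young and H\"older inequalities --- so there is no in-paper argument to compare against; your derivation is the standard one and would serve as a self-contained substitute for the citation.
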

The second lemma is an operator version of the classical Jensen inequality.
\begin{lemma}\label{lemma 2.2}\cite[Theorem 1.2]{14}
Let $ A \in\mathbb{B}(\mathcal{H})$ be a self-adjoint operator with $ {\rm sp}(A) \subseteq [m, M] $ for some scalars $ m \leq M $, and let $ x \in \mathcal{H}$ be a unit vector. If $ f(t)$ is a convex function on $[m, M]$, then
\begin{equation*}
f\left( \left\langle Ax, x \right\rangle \right) \leq \left\langle f(A)x, x \right\rangle.
\end{equation*}
In particular, if $A\geq 0$, then
\begin{equation*}
\left\langle Ax, x \right\rangle^r \leq \left\langle A^r x, x \right\rangle \qquad (r \geq 1).
\end{equation*}
\end{lemma}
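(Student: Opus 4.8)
The plan is to reduce this operator inequality to the classical scalar Jensen inequality by exploiting that the functional calculus converts pointwise scalar inequalities on ${\rm sp}(A)$ into operator inequalities. I would present the argument through a supporting line, which keeps everything self-contained within $\mathbb{B}(\mathcal{H})$ and avoids invoking measure theory explicitly.

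First I would observe that the number $t_0 := \langle Ax, x \rangle$ lies in $[m,M]$: since $mI \leq A \leq MI$ and $\|x\| = 1$, we have $m \leq \langle Ax, x \rangle \leq M$, so $f$ is defined and convex at $t_0$. Next, using convexity of $f$ at the interior point $t_0$, I would choose a supporting line, i.e. a slope $\alpha$ with $f(t) \geq f(t_0) + \alpha (t - t_0)$ for every $t \in [m,M]$. Because this scalar inequality holds on all of ${\rm sp}(A) \subseteq [m,M]$, the functional calculus yields the operator inequality $f(A) \geq f(t_0) I + \alpha (A - t_0 I)$. Finally I would take the inner product against $x$: using $\langle x, x \rangle = 1$ and $\langle A x, x \rangle = t_0$, the linear term $\alpha(\langle Ax, x\rangle - t_0)$ cancels, leaving $\langle f(A) x, x \rangle \geq f(t_0) = f(\langle Ax, x\rangle)$, which is the desired bound.

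For the \emph{in particular} assertion, I would apply the first part with the convex function $f(t) = t^r$ ($r \geq 1$) on $[0, \infty)$; since $A \geq 0$ forces ${\rm sp}(A) \subseteq [0, M]$ and $\langle Ax, x\rangle \geq 0$, this gives $\langle Ax, x\rangle^r \leq \langle A^r x, x\rangle$ at once.

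The one delicate point—the main obstacle—is the existence of the supporting line when $t_0$ is an endpoint of $[m,M]$, where a finite convex $f$ need not admit a two-sided support. I would dispose of this by noting that the boundary cases are degenerate: if $\langle Ax, x\rangle = m$, then $\langle (A - mI)x, x\rangle = 0$ with $A - mI \geq 0$ forces $(A-mI)^{1/2} x = 0$, so $Ax = mx$ and $x$ is an eigenvector; consequently $f(A)x = f(m) x$ and the asserted inequality holds with equality. The case $t_0 = M$ is symmetric. Thus only interior $t_0$ require the supporting-line step, where convexity guarantees its existence. Alternatively, one may push the spectral measure of $A$ forward through $x$ to the probability measure $\mu_x(\cdot) = \langle E(\cdot)x, x\rangle$ on $[m,M]$ and quote classical Jensen directly, since $\langle Ax,x\rangle = \int t\, d\mu_x$ and $\langle f(A)x, x\rangle = \int f(t)\, d\mu_x$.
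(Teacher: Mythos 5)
The paper itself offers no proof of this lemma: it is imported verbatim from \cite[Theorem 1.2]{14} (the Mond--Pe\v{c}ari\'{c} monograph), so there is no in-paper argument to compare against; your proposal fills that gap. Your proof is correct and is essentially the standard argument for this result: a supporting line at $t_0=\langle Ax,x\rangle$, upgraded by the functional calculus (since the scalar inequality holds on all of ${\rm sp}(A)\subseteq[m,M]$) to the operator inequality $f(A)\geq f(t_0)I+\alpha(A-t_0I)$, after which the linear term vanishes upon pairing with the unit vector $x$. You also correctly identified and resolved the one genuine subtlety, namely that a two-sided supporting line may fail when $t_0$ is an endpoint: your observation that $\langle (A-mI)x,x\rangle=0$ together with $A-mI\geq 0$ forces $(A-mI)^{1/2}x=0$, hence $Ax=mx$ and $f(A)x=f(m)x$, is exactly right (and shows in passing that $m$ must then lie in ${\rm sp}(A)$). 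A marginally slicker variant, closer to the source's viewpoint, avoids the case split altogether: a continuous convex $f$ on $[m,M]$ is the pointwise supremum of its affine minorants on all of $[m,M]$, endpoints included, and for each affine $\ell\leq f$ one has $\langle f(A)x,x\rangle\geq\langle \ell(A)x,x\rangle=\ell(\langle Ax,x\rangle)$; taking the supremum over $\ell$ gives the claim without treating the boundary separately. Your spectral-measure alternative, pushing $A$ forward to the probability measure $\mu_x(\cdot)=\langle E(\cdot)x,x\rangle$ and quoting classical Jensen, is likewise valid. One implicit hypothesis worth flagging explicitly: for $f(A)$ to be defined by the continuous functional calculus, $f$ should be continuous on $[m,M]$ (automatic on the open interval, but not at the endpoints for a bare convex function); this continuity assumption is how \cite{14} states the theorem, and your argument uses nothing beyond it.
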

The third lemma is known as the generalized mixed Cauchy--Schwarz inequality.
\begin{lemma}\cite{5}
Let $ A \in\mathbb{B}(\mathcal{H})$, and let $ x, y \in \mathcal{H}$ be any vectors. If $ f$ and $g $ are non-negative continuous functions on $\left[0, \infty \right) $ satisfying 
$ f(t) g(t) = t \ (t \geq 0), $ then 
\begin{equation*}
\left| \left\langle Ax, y \right\rangle \right|\leq 
\left\langle f^2(\left| A \right|) x, x\right\rangle^{\frac{1}{2}} \left\langle g^2(\left| A^*\right|)y, y \right\rangle^{\frac{1}{2}}.
\end{equation*}
\end{lemma}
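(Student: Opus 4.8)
The plan is to reduce the asserted mixed inequality to the ordinary Cauchy--Schwarz inequality by way of the polar decomposition $A = U|A|$, in which $U$ is a partial isometry whose initial projection $U^*U$ is the orthogonal projection onto the closure of the range of $|A|$; in particular $U^*U\,|A| = |A|$, and the decomposition yields the fundamental relation $|A^*| = U|A|U^*$. Since $f$ and $g$ are continuous with $f(t)g(t) = t$, the functional calculus provides the factorization $|A| = f(|A|)g(|A|)$, where $f(|A|)$ and $g(|A|)$ are commuting self-adjoint operators. This factorization is the crux of the strategy: it splits the ``size'' of $A$ into an $f$-part and a $g$-part, which can be routed onto the two vectors $x$ and $y$ separately.

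Carrying this out, I would write
\begin{equation*}
\langle Ax, y\rangle = \langle U f(|A|)g(|A|)x, y\rangle = \langle f(|A|)x, \, g(|A|)U^* y\rangle,
\end{equation*}
moving $U$ to its adjoint and using that $f(|A|)$ and $g(|A|)$ are commuting self-adjoint operators. The ordinary Cauchy--Schwarz inequality then gives
\begin{equation*}
|\langle Ax, y\rangle| \leq \|f(|A|)x\| \, \|g(|A|)U^* y\| = \langle f^2(|A|)x, x\rangle^{\frac{1}{2}} \, \langle U g^2(|A|) U^* y, \, y\rangle^{\frac{1}{2}},
\end{equation*}
where $\|f(|A|)x\|^2 = \langle f^2(|A|)x, x\rangle$ and $\|g(|A|)U^*y\|^2 = \langle U g^2(|A|) U^* y, y\rangle$ follow from the functional calculus. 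The $x$-factor is already in the desired form, so everything reduces to comparing the operator $U g^2(|A|) U^*$ with $g^2(|A^*|)$.

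The heart of the matter --- and the one genuinely delicate point --- is therefore the operator inequality $U\, g^2(|A|)\, U^* \leq g^2(|A^*|)$. If $g(0) = 0$ this is even an equality: approximating $g^2$ uniformly by polynomials vanishing at the origin and using $U|A|^n U^* = (U|A|U^*)^n = |A^*|^n$ for $n \geq 1$ (which relies on $U^*U\,|A| = |A|$), one obtains $U g^2(|A|)U^* = g^2(U|A|U^*) = g^2(|A^*|)$. In general one cannot assume $g(0) = 0$, so I would split $g^2 = g^2(0) + \psi$ with $\psi(0) = 0$; applying the previous identity to $\psi$ and the relation $UU^* \leq I$ to the constant part gives
\begin{equation*}
g^2(|A^*|) - U g^2(|A|) U^* = g^2(0)\left(I - UU^*\right) \geq 0,
\end{equation*}
because $g^2(0) \geq 0$ and $UU^*$ is an orthogonal projection. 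Feeding this back into the Cauchy--Schwarz bound yields the claim. The obstacle to keep in view is precisely this behaviour at the origin: the constraint $f(t)g(t) = t$ forces only $f(0)g(0) = 0$ and not $g(0) = 0$, so the naive identity $U g^2(|A|)U^* = g^2(|A^*|)$ may fail when $U$ is a proper partial isometry, and the argument must rely instead on the one-sided operator inequality, whose direction is secured by $g^2(0) \geq 0$.
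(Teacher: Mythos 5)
Your argument is correct, and it is essentially the classical proof of this lemma; note that the paper itself offers no proof here but simply quotes the result from Kittaneh's 1988 article \cite{5}, where the same route (polar decomposition $A=U|A|$, the factorization $|A|=f(|A|)g(|A|)$, ordinary Cauchy--Schwarz, and the comparison of $Ug^2(|A|)U^*$ with $g^2(|A^*|)$) is taken. You are also right that the behaviour at the origin is the only delicate point, and your treatment of it --- reducing to $\psi=g^2-g^2(0)$ with $\psi(0)=0$ and using $g^2(0)\,(I-UU^*)\geq 0$ --- is sound; a slightly shorter variant is to observe that $g(|A|)U^*=U^*g(|A^*|)$ holds for \emph{every} continuous $g$ (the constant terms match on both sides), whence $\|g(|A|)U^*y\|\leq\|g(|A^*|)y\|$ follows from $\|U^*\|\leq 1$ without any case distinction.
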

The fourth lemma can be found in \cite{9, 8}. 
\begin{lemma}\label{lemma 2.4}
Let $A, B,$ and $D$ be operators in $\mathbb{B}(\mathcal{H})$. Then \\
$(a) \ \ w(A) = \max_{\theta \in \mathbb{R}} \left\| Re \left(e^{i \theta}A \right)\right\|, 
$\\
$(b) \ \ w \left(\begin{bmatrix}
A& 0 \\
0 & D \\
\end{bmatrix}\right)= \max\left( w(A), w(D) \right), $ \\
$(c) \ \ w \left(\begin{bmatrix}
A& B \\
B & A \\
\end{bmatrix}\right) = \max\left( w(A+B), w(A-B) \right), $ \\
$(d) \ \ w \left(\begin{bmatrix}
A& B \\
-B & A \\
\end{bmatrix}\right) = \max\left( w(A+iB), w(A-iB)\right). $
\end{lemma}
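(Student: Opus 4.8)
The plan is to establish part (a) directly from the definition of $w(\cdot)$, and then to deduce parts (b), (c), and (d) from it by combining the unitary invariance of the numerical radius with the norm formula for block-diagonal self-adjoint operators. The organizing principle is that (a) reduces every numerical-radius computation to the operator norm of self-adjoint real parts $Re(e^{i\theta}A)=\tfrac12(e^{i\theta}A+e^{-i\theta}A^*)$, a quantity that behaves well under the block manipulations in (b)--(d).

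For (a), I would start from $w(A)=\sup_{\|x\|=1}|\langle Ax,x\rangle|$ and apply, for each fixed unit vector $x$, the scalar identity $|z|=\max_{\theta\in\mathbb{R}}Re(e^{i\theta}z)$ with $z=\langle Ax,x\rangle$. Since $\langle Re(e^{i\theta}A)x,x\rangle=Re(e^{i\theta}\langle Ax,x\rangle)$, this yields $w(A)=\sup_{\|x\|=1}\sup_{\theta}\langle Re(e^{i\theta}A)x,x\rangle$, and because the two suprema are over a product set they may be taken in either order. Writing $S_\theta=Re(e^{i\theta}A)$, which is self-adjoint, I would use $\sup_{\|x\|=1}\langle S_\theta x,x\rangle=\lambda_{\max}(S_\theta)$, the top of the spectrum; then, noting that replacing $\theta$ by $\theta+\pi$ sends $S_\theta$ to $-S_\theta$ and hence $\lambda_{\max}(S_\theta)$ to $-\lambda_{\min}(S_\theta)$, the outer maximum over $\theta$ upgrades $\sup_x\langle S_\theta x,x\rangle$ to $\|S_\theta\|=\max(\lambda_{\max}(S_\theta),-\lambda_{\min}(S_\theta))$. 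Continuity and $2\pi$-periodicity of $\theta\mapsto\|S_\theta\|$ guarantee the supremum is attained, so it is a genuine maximum, giving (a).

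For (b), I would apply (a) to $T=\begin{bmatrix}A&0\\0&D\end{bmatrix}$. Since $Re(e^{i\theta}T)=\begin{bmatrix}Re(e^{i\theta}A)&0\\0&Re(e^{i\theta}D)\end{bmatrix}$ is block diagonal and self-adjoint, its norm equals the larger of the two block norms, and distributing the outer $\max_\theta$ over this finite maximum gives $w(T)=\max\big(\max_\theta\|Re(e^{i\theta}A)\|,\max_\theta\|Re(e^{i\theta}D)\|\big)=\max(w(A),w(D))$. For (c) and (d) I would reduce to (b) by an explicit unitary similarity: the Hadamard-type unitary $U=\tfrac{1}{\sqrt2}\begin{bmatrix}I&I\\I&-I\end{bmatrix}$ gives $U^*\begin{bmatrix}A&B\\B&A\end{bmatrix}U=\begin{bmatrix}A+B&0\\0&A-B\end{bmatrix}$, while $V=\tfrac{1}{\sqrt2}\begin{bmatrix}I&I\\iI&-iI\end{bmatrix}$ gives $V^*\begin{bmatrix}A&B\\-B&A\end{bmatrix}V=\begin{bmatrix}A+iB&0\\0&A-iB\end{bmatrix}$; both are routine $2\times2$ block computations. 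Since $w(U^*TU)=w(T)$ for every unitary $U$, applying (b) to these diagonalized forms yields the stated identities.

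The main obstacle is concentrated entirely in part (a): one must justify the interchange of the two suprema (automatic, but worth stating) and, more importantly, verify that maximizing over the rotation parameter $\theta$ promotes the one-sided quantity $\sup_{\|x\|=1}\langle S_\theta x,x\rangle$ to the full operator norm $\|S_\theta\|$. Once (a) is secured, parts (b)--(d) are formal consequences of unitary invariance and the block-diagonal norm identity, requiring only the verification of the two similarity relations above.
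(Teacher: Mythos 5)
Your proposal is correct. Note that the paper itself gives no proof of this lemma---it is stated with a citation to the references (Yamazaki; Kittaneh), so there is no in-paper argument to compare against; your proof is essentially the standard one found in that literature. Part (a) is Yamazaki's rotation argument: the identity $|z|=\max_{\theta}Re(e^{i\theta}z)$ applied to $z=\langle Ax,x\rangle$, the interchange of the two suprema, and the observation that $\theta\mapsto\theta+\pi$ sends $S_\theta$ to $-S_\theta$, which correctly upgrades $\sup_{\|x\|=1}\langle S_\theta x,x\rangle$ to $\|S_\theta\|=\max\bigl(\lambda_{\max}(S_\theta),-\lambda_{\min}(S_\theta)\bigr)$; your continuity-plus-periodicity argument for attainment of the maximum is also sound. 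One pedantic point: in infinite dimensions $\lambda_{\max}(S_\theta)$ should be read as $\max \mathrm{sp}(S_\theta)$, and the identity $\sup_{\|x\|=1}\langle S_\theta x,x\rangle=\max\mathrm{sp}(S_\theta)$ holds even though the supremum over $x$ need not be attained---this does not affect your argument, since only equality of suprema is used. Parts (b)--(d) are handled exactly as in the cited sources: the direct-sum norm identity for the self-adjoint operator $Re(e^{i\theta}T)$ gives (b), and I verified your two block computations, $U^*\begin{bmatrix}A&B\\B&A\end{bmatrix}U=\begin{bmatrix}A+B&0\\0&A-B\end{bmatrix}$ with $U=\tfrac{1}{\sqrt2}\begin{bmatrix}I&I\\I&-I\end{bmatrix}$ and $V^*\begin{bmatrix}A&B\\-B&A\end{bmatrix}V=\begin{bmatrix}A+iB&0\\0&A-iB\end{bmatrix}$ with $V=\tfrac{1}{\sqrt2}\begin{bmatrix}I&I\\iI&-iI\end{bmatrix}$; both are correct, and unitary invariance of $w(\cdot)$ then finishes the proof. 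No gaps.
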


The following result is a variant of a known result (see \cite[Corollary 3.5]{KOS}) but with a different proof.
\begin{lemma}\label{rooin1}
Let $h$ be a non-negative nondecreasing convex function on $[0, \infty )$ and let $ A, B  \in  \mathbb{B}(\mathcal{H}) $ be positive operators. Then
\[h\left( \left\| \dfrac{A+B}{2}\right\|  \right) \leq \left\| \dfrac{h(A) + h(B)}{2}\right\|.\]
\end{lemma}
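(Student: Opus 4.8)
The plan is to exploit the variational description of the operator norm of a positive operator: since $A$ and $B$ are positive, so is $\frac{A+B}{2}$, and for any positive $T\in\mathbb{B}(\mathcal{H})$ one has $\|T\|=\sup_{\|x\|=1}\langle Tx,x\rangle$. Likewise $\frac{h(A)+h(B)}{2}$ is positive, because $h\ge 0$ forces $h(A),h(B)\ge 0$ through the functional calculus. This lets me reduce the operator inequality to a scalar estimate tested against an arbitrary unit vector, combining convexity with the operator Jensen inequality of Lemma \ref{lemma 2.2}, and only afterwards passing to a supremum.

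First I would fix a unit vector $x\in\mathcal{H}$ and write $\langle\frac{A+B}{2}x,x\rangle=\frac12(\langle Ax,x\rangle+\langle Bx,x\rangle)$. Midpoint convexity of $h$ then gives $h(\langle\frac{A+B}{2}x,x\rangle)\le\frac12(h(\langle Ax,x\rangle)+h(\langle Bx,x\rangle))$. Since $A$ and $B$ are positive (self-adjoint with spectra contained in $[0,\infty)$) and $h$ is convex, Lemma \ref{lemma 2.2} applies to each term and yields $h(\langle Ax,x\rangle)\le\langle h(A)x,x\rangle$ and $h(\langle Bx,x\rangle)\le\langle h(B)x,x\rangle$. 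Chaining these estimates and using positivity of $\frac{h(A)+h(B)}{2}$, I obtain, for every unit vector $x$,
\[
h\!\left(\left\langle \frac{A+B}{2}x,x\right\rangle\right)\le \left\langle \frac{h(A)+h(B)}{2}x,x\right\rangle\le \left\|\frac{h(A)+h(B)}{2}\right\|.
\]

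The main obstacle is to upgrade the left-hand side to $h(\|\frac{A+B}{2}\|)$, because the supremum defining $\|\frac{A+B}{2}\|$ need not be attained in infinite dimensions. To handle this I would pick a maximizing sequence of unit vectors $x_n$ with $\langle\frac{A+B}{2}x_n,x_n\rangle\to\|\frac{A+B}{2}\|$, apply the displayed bound along $x_n$, and let $n\to\infty$. The passage to the limit uses that $h$ is continuous on $[0,\infty)$: convexity forces continuity on $(0,\infty)$ together with $\limsup_{t\to0^+}h(t)\le h(0)$, while the nondecreasing hypothesis gives $h(t)\ge h(0)$, so $h$ is continuous at the endpoint $0$ as well (this endpoint only matters in the degenerate case $A=B=0$). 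Continuity of $h$ at $\|\frac{A+B}{2}\|$ then turns the limit of the left-hand sides into $h(\|\frac{A+B}{2}\|)$, and since the right-hand side is the constant $\|\frac{h(A)+h(B)}{2}\|$ independent of $n$, the desired inequality follows.
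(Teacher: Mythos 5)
Your proposal is correct and follows essentially the same route as the paper: test against a unit vector, use convexity of $h$ plus the operator Jensen inequality (Lemma \ref{lemma 2.2}) to get $h\left(\left\langle \frac{A+B}{2}x,x\right\rangle\right)\le\left\|\frac{h(A)+h(B)}{2}\right\|$, then pass to the supremum using that $h$ is nondecreasing and continuous. Your treatment of the limiting step via a maximizing sequence and the continuity of $h$ at $0$ is in fact slightly more careful than the paper's, which simply interchanges $h$ with the supremum.
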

\begin{proof}
For each unit vector $ x \in \mathcal{H} $, we have
\begin{align}
h\left( \left\langle  \dfrac{A+B}{2} x, x\right\rangle  \right) 
& = h \left(  \dfrac{\left\langle A x, x \right\rangle + \left\langle Bx ,x \right\rangle  }{2}\right) \nonumber\\
&  \leq \dfrac{h\left(\left\langle Ax ,x \right\rangle  \right) +h\left(\left\langle Bx ,x \right\rangle  \right)}{2} \tag{by the convexity of $ h $} \nonumber\\
& \leq \dfrac{\left\langle h(A) x, x \right\rangle + \left\langle h(B) x, x \right\rangle }{2} \tag{by the operator Jensen inequality} \nonumber\\
& = \left\langle  \dfrac{h(A)+ h(B)}{2} x , x \right\rangle \nonumber \\
& \leq \left\| \dfrac{h(A) +h(B)}{2}\right\|\label{msm20}. 
\end{align}
Now, since $h$ is a non-negative, non-decreasing and convex (continuous) function, by considering (\ref{msm20}) and taking the  supermum from the left hand side, we get  
\begin{align*}
h\left( \left\| \dfrac{A+B}{2}\right\|  \right) & = h\left( w\left( \dfrac{A+B}{2}\right)  \right) \\
& = h\left(\sup \left\langle  \dfrac{A+B}{2} x, x\right\rangle  \right) \\
& = \sup \left( h\left( \left\langle  \dfrac{A+B}{2} x, x\right\rangle \right) \right) \\
& \leq \left\| \dfrac{h(A) + h(B)}{2}\right\|.
\end{align*}
\end{proof}

We are in a position to demonstrate the main results of this section by adopting and extending some techniques of {10,msmz, 12, 11}.
The following theorem gives a generalization of inequality (\ref{inequality 1.4}). Recall that the polarization identity says that for any 
elements $ x, y$ of an inner product space $\mathcal{H} $, 
\begin{equation*}
\left\langle x, y \right\rangle = 
\dfrac{1}{4} \sum_{k=0}^{3} i^k \left\| x+ i^k y \right\|^2. 
\end{equation*}

\begin{theorem}\label{theorem 2.5}
Let 
$ S= \begin{bmatrix}
0& B \\
C& 0 \end{bmatrix} \in\mathbb{B}(\mathcal{H} \oplus \mathcal{H})$, and let $f$ and $ g$ be non-negative continuous functions on 
$ [0, \infty )$ such that $ f(t)g(t)=t \ (t \geq 0) $.
Then for all non-negative nondecreasing convex functions $h$ on 
$ [0, \infty ) $,
\begin{equation}\label{inequality 2.1}
h\left( w(S)\right) \leq 
\dfrac{1}{4}\left\| h\left( f^2(\left| B \right| )\right)+ 
h\left(g^2(\left| B \right|)\right)\right\|+
\dfrac{1}{4}\left\| h\left(f^2(\left| C \right|)\right)+ 
h\left(g^2(\left| C \right|)\right)\right\|.
\end{equation}
\end{theorem}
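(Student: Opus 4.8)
The plan is to avoid estimating the bilinear form $\langle S\xi,\xi\rangle$ directly. Doing so couples $B$ and $C$ and, through the generalized mixed Cauchy--Schwarz inequality, forces the adjoint absolute values $|B^*|,|C^*|$ into the bound rather than the $|B|,|C|$ demanded by (\ref{inequality 2.1}); moreover the resulting estimate mixes $B$ with $C$ in each vector slot, so it cannot separate into a pure-$B$ term plus a pure-$C$ term. Instead I would first collapse $w(S)$ to the ordinary operator norms $\|B\|,\|C\|$, and only afterwards inject the functions $f,g,h$.

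Concretely, the first step is to show $w(S)\le\frac12\bigl(\|B\|+\|C\|\bigr)$. This is (\ref{inequality 1.4}), but for self-containedness I would rederive it from Lemma~\ref{lemma 2.4}(a): writing $\mathrm{Re}(e^{i\theta}S)=\frac12(e^{i\theta}S+e^{-i\theta}S^*)$ produces a self-adjoint off-diagonal matrix whose $(1,2)$-entry is $\frac12(e^{i\theta}B+e^{-i\theta}C^*)$, and such a matrix has norm $\frac12\|e^{i\theta}B+e^{-i\theta}C^*\|\le\frac12(\|B\|+\|C\|)$; taking the maximum over $\theta$ gives the claim. Since $h$ is nondecreasing, $h(w(S))\le h\bigl(\frac{\|B\|+\|C\|}{2}\bigr)$, and the convexity of $h$ yields $h\bigl(\frac{\|B\|+\|C\|}{2}\bigr)\le\frac12 h(\|B\|)+\frac12 h(\|C\|)$.

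It then remains to bound each scalar $h(\|B\|)$ by the corresponding norm term, and this is the key step. From $f(t)g(t)=t$ and the spectral calculus, $f(|B|)$ and $g(|B|)$ are commuting positive operators with $f(|B|)g(|B|)=|B|$, so $\frac{f^2(|B|)+g^2(|B|)}{2}-|B|=\frac12\bigl(f(|B|)-g(|B|)\bigr)^2\ge0$, whence $\|B\|=\||B|\|\le\bigl\|\frac{f^2(|B|)+g^2(|B|)}{2}\bigr\|$. Applying the nondecreasing $h$ and then Lemma~\ref{rooin1} to the positive operators $f^2(|B|)$ and $g^2(|B|)$ gives $h(\|B\|)\le h\bigl(\bigl\|\frac{f^2(|B|)+g^2(|B|)}{2}\bigr\|\bigr)\le\bigl\|\frac{h(f^2(|B|))+h(g^2(|B|))}{2}\bigr\|=\frac12\|h(f^2(|B|))+h(g^2(|B|))\|$, and symmetrically for $C$. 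Substituting these two estimates into $\frac12 h(\|B\|)+\frac12 h(\|C\|)$ reproduces exactly the right-hand side of (\ref{inequality 2.1}).

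The part I expect to be most delicate is conceptual rather than computational: recognizing that the pointwise mixed Cauchy--Schwarz route is the wrong one. A short example (e.g. $B=\left[\begin{smallmatrix}0&2\\0&0\end{smallmatrix}\right]$, $C=0$) shows that estimating $\langle S\xi,\xi\rangle$ through mixed Cauchy--Schwarz forces $|B^*|$ into the bound, and the ``starred'' expression so obtained is genuinely different from, and not comparable to, the stated one. The clean adjoint-free separation with constant $\tfrac14=\tfrac12\cdot\tfrac12$ emerges only from the two-stage argument above, where one factor $\tfrac12$ comes from the triangle inequality for $w(S)$ and the other from the halving inside Lemma~\ref{rooin1}; once this split is seen, the remaining steps are routine.
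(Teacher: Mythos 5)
Your proof is correct, and it takes a genuinely different route from the paper's. The paper works at the operator level throughout: it builds the polar decomposition $S=W|S|$ from those of $B$ and $C$, expands $\mathrm{Re}\langle e^{i\theta}Sx,x\rangle$ via the polarization identity, and bounds the resulting row operator matrices to reach $w(S)\le\frac14\|f^2(|B|)+g^2(|B|)\|+\frac14\|f^2(|C|)+g^2(|C|)\|$ before applying $h$ and Lemma~\ref{rooin1}. You instead factor the argument into two independent scalar-level steps: the classical bound $w(S)\le\frac12(\|B\|+\|C\|)$ (inequality (\ref{inequality 1.4}), which your rederivation via $\|\mathrm{Re}(e^{i\theta}S)\|=\frac12\|e^{i\theta}B+e^{-i\theta}C^*\|$ handles correctly), followed by the operator AM--GM inequality $|B|\le\frac{f^2(|B|)+g^2(|B|)}{2}$, which is valid because $f(|B|)$ and $g(|B|)$ commute and $f(|B|)g(|B|)=|B|$. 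All the individual steps check out, including the use of monotonicity and convexity of $h$ and the appeal to Lemma~\ref{rooin1}, and the constants assemble to exactly $\frac14+\frac14$. Your route is more modular and arguably more illuminating: it exposes that the intermediate operator bound in (\ref{inequality 2.1}) is never sharper than $\frac12(\|B\|+\|C\|)$ itself, so the ``generalization'' is obtained by post-composing (\ref{inequality 1.4}) with AM--GM rather than by a genuinely finer estimate. What the paper's polar-decomposition technique buys is reusability: the same machinery drives Theorem~\ref{theorem 3.1}, where the diagonal structure produces a maximum instead of a sum, and your reduction to $\frac12(\|B\|+\|C\|)$ would not deliver that refinement there. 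One small caveat: your opening claim that the mixed Cauchy--Schwarz route is ``the wrong one'' overstates the case --- the paper does not use it for this theorem either, and its polar-decomposition argument shows the adjoint-free form is reachable by a direct operator estimate as well.
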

\begin{proof}
Let $ B= U \left| B \right| $, and let $ C= V \left| C \right| $ be the polar decompositions of the operators $B$ and $C$. Then 
\begin{equation*}
S= W \left| S \right|=
\begin{bmatrix}
0& U \\
V& 0 \end{bmatrix}
\begin{bmatrix}
\left| C \right|& 0 \\
0 & \left| B \right| \end{bmatrix}
\end{equation*}
is the polar decomposition of $ S $.
Let $x=(x_1, x_2)$ be any unit vector in 
$\mathcal{H} \oplus \mathcal{H}$; that is, $ \left\| x_1\right\| ^2 + \left\| x_2\right\|^2=1$. 
Then for all $ \theta \in \mathbb{R}$, we obtain 
\begin{align*}
& Re\left\langle e^{i \theta} Sx, x \right\rangle \\
& = Re \left\langle e^{i \theta} W \left| S \right| x, x \right\rangle\\
& = Re \left\langle e^{i \theta} W f(\left| S \right|) g(\left| S \right|) x, x \right\rangle 
\tag{by functional calculus} \\
& = Re \left\langle e^{i \theta} g(\left| S \right|) x, f(\left| S \right|) W^* x \right\rangle\\
& = Re\left\langle e^{i \theta}
\begin{bmatrix}
g(\left| C \right|) & 0 \\
0 & g(\left| B \right|) 
\end{bmatrix}
\begin{bmatrix}
x_1\\x_2
\end{bmatrix}, 
\begin{bmatrix}
f(\left| C \right|) & 0 \\
0 & f(\left| B \right|) 
\end{bmatrix}
\begin{bmatrix}
0 & V^*\\
U^* & 0
\end{bmatrix}
\begin{bmatrix}
x_1\\x_2
\end{bmatrix} \right\rangle \\
& = Re\left\langle e^{i \theta}\left(g(\left| C \right|)x_1, g(\left| B \right|) x_2 \right), \left( f(\left| C \right|)V^* x_2, f(\left| B \right|) U^* x_1 \right) \right\rangle \\
& =Re \left(\left\langle e^{i \theta}g(\left| C \right|) x_1, f(\left| C \right|) V^* x_2\right\rangle + \left\langle e^{i \theta} g(\left| B \right|) x_2, f(\left| B \right|) U^* x_1 \right\rangle \right)\\
& = \dfrac{1}{4} \left( \left\| e^{i \theta}g(\left| C \right|) x_1 + f(\left| C \right|) V^* x_2 \right\|^2 - \left\| e^{i \theta}g(\left| C \right|) x_1- f(\left| C \right|) V^* x_2 \right\|^2 \right) \\
& \quad +\dfrac{1}{4} \left( \left\| e^{i \theta} g(\left| B \right|) x_2+ 
f(\left| B \right|) U^* x_1 \right\|^2 - \left\| e^{i \theta} g(\left| B \right|) x_2- f(\left| B \right|) U^* x_1 \right\|^2 \right) 
\tag{by the polarization identity} \\
& \leq \dfrac{1}{4}\left\| e^{i \theta}g(\left| C \right|) x_1 + f(\left| C \right|) V^* x_2 \right\|^2
+ \dfrac{1}{4}\left\| e^{i \theta} g(\left| B \right|) x_2+ 
f(\left| B \right|) U^* x_1 \right\|^2 \\
& = \dfrac{1}{4} \left\|
\begin{bmatrix} 
e^{i \theta}g(\left| C \right|) \quad
f(\left| C \right|) V^*
\end{bmatrix}
\begin{bmatrix}
x_1\\ x_2
\end{bmatrix}
\right\|^2 
+ \dfrac{1}{4} \left\|
\begin{bmatrix} 
f(\left| B \right|) U^* \quad
e^{i \theta}g(\left| B \right|) 
\end{bmatrix}
\begin{bmatrix}
x_1\\ x_2
\end{bmatrix}
\right\|^2 \\
& \leq \dfrac{1}{4}\left\|
\begin{bmatrix} 
e^{i \theta}g(\left| C \right|) \quad
f(\left| C \right|) V^*
\end{bmatrix}\right\| ^2
+ \dfrac{1}{4}\left\|
\begin{bmatrix} 
f(\left| B \right|) U^* \quad
e^{i \theta}g(\left| B \right|) 
\end{bmatrix}\right\| ^2 \\
& = \dfrac{1}{4} \left\|
\begin{bmatrix} 
e^{i \theta}g(\left| C \right|) \quad
f(\left| C \right|) V^*
\end{bmatrix} 
\begin{bmatrix} 
e^{-i \theta}g(\left| C \right|) \\
V f(\left| C \right|) 
\end{bmatrix}
\right\|
+ \dfrac{1}{4} \left\|
\begin{bmatrix} 
f(\left| B \right|) U^* \quad
e^{i \theta}g(\left| B \right|) 
\end{bmatrix}
\begin{bmatrix} 
U f(\left| B \right|) \\
e^{-i \theta}g(\left| B \right|) 
\end{bmatrix} \right\| \\
& = \dfrac{1}{4} \left\|g^2(\left| C \right|)+ f(\left| C \right|)V^* V f(\left| C \right|)\right\|
+ \dfrac{1}{4} \left\|f(\left| B \right|) U^*U f(\left| B \right|)+ g^2(\left| B \right|) \right\| \\
& = \dfrac{1}{4}\left\|f^2(\left| C \right|)+g^2( \left| C \right|)\right\|
+\dfrac{1}{4} \left\|f^2(\left| B \right|)+ g^2(\left| B \right|) \right\|.
\end{align*}
Taking the supremum over all unit vectors $ x=(x_1, x_2)$ and utilizing Lemma \ref{lemma 2.4} (a), we get
\begin{equation*}
w(S) \leq \dfrac{1}{4} \left\|f^2\left( \left|C \right| \right) +g^2\left(\left| C \right|\right) \right\|
+ \dfrac{1}{4} \left\|f^2\left(\left| B \right| \right) + g^2\left(\left| B \right|\right)\right\|.
\end{equation*}
Therefore, 
\begin{align*}
h(w(S)) &
\leq \dfrac{1}{2} h\left(\left\|\dfrac{f^2(\left| C \right|)+ g^2( \left| C \right|)}{2}\right\|\right) 
+\dfrac{1}{2} h \left( \left\| \dfrac{f^2(\left| B \right|)
+g^2(\left| B \right|)}{2}
\right\| \right) 
\tag{since \textit{h} is nondecreasing and convex } \\
& \leq \dfrac{1}{4} \left\| h \left( f^2(\left| C \right|)\right) 
+ h \left( g^2( \left| C \right|)\right) \right\|
+\dfrac{1}{4} \left\| h\left( f^2(\left| B \right|)\right) +h\left( g^2(\left| B \right|)\right) \right\|
\tag{by Lemma \ref{rooin1}.}
\end{align*}
\end{proof}
The next corollary gives a generalization of inequality (\ref{inequality 1.4}). 
\begin{corollary} 
Let $ B, C \in\mathbb{B}(\mathcal{H}) $. Then
\begin{equation}\label{inequality 2.2}
w^r \left( \begin{bmatrix}
0& B \\
C& 0 \end{bmatrix}\right) 
\leq \dfrac{1}{4}\left\|\,\left|B\right|^{2r\alpha}
+\left|B\right|^{2r(1-\alpha)}\right\|
+ \dfrac{1}{4}\left\|\,\left|C \right|^{2r\alpha} + \left|C\right|^{2r(1-\alpha)} \right\|,
\end{equation}
for all $ \alpha\in[0, 1]$ and $ r \geq 1 $.
\end{corollary}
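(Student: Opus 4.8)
The plan is to obtain this corollary as a direct specialization of Theorem \ref{theorem 2.5}. The right-hand side of (\ref{inequality 2.2}) exhibits exactly the shape of the bound in (\ref{inequality 2.1}), with the functions $f^2$, $g^2$, and $h$ replaced by suitable power functions. This pattern dictates the choices $f(t) = t^{\alpha}$, $g(t) = t^{1-\alpha}$, and $h(t) = t^{r}$, and the whole task reduces to checking that these choices are admissible and then reading off the resulting inequality.

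First I would verify that these satisfy the hypotheses of Theorem \ref{theorem 2.5}. For $\alpha \in [0,1]$, both $f(t) = t^{\alpha}$ and $g(t) = t^{1-\alpha}$ are non-negative continuous functions on $[0, \infty)$, and they obey the required multiplicative relation $f(t)g(t) = t^{\alpha}\, t^{1-\alpha} = t$ for every $t \geq 0$. For $r \geq 1$, the function $h(t) = t^{r}$ is non-negative, nondecreasing, and convex on $[0, \infty)$, precisely as the theorem demands.

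Next I would evaluate the functional-calculus terms appearing in (\ref{inequality 2.1}) under these substitutions. One has $f^{2}(|B|) = |B|^{2\alpha}$ and $g^{2}(|B|) = |B|^{2(1-\alpha)}$, whence $h\bigl(f^{2}(|B|)\bigr) = |B|^{2r\alpha}$ and $h\bigl(g^{2}(|B|)\bigr) = |B|^{2r(1-\alpha)}$, with the analogous identities for $C$; moreover $h(w(S)) = w^{r}(S)$. Substituting these into (\ref{inequality 2.1}) produces (\ref{inequality 2.2}) verbatim.

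Since each step is a routine specialization, I do not expect any genuine obstacle. The only point meriting a moment's care is the endpoint cases $\alpha = 0$ and $\alpha = 1$, in which one of $f$ or $g$ degenerates to the constant function $1$; these functions remain non-negative and continuous on $[0, \infty)$ and still satisfy $f(t)g(t) = t$, so the hypotheses of Theorem \ref{theorem 2.5} continue to hold and the bound is valid across the full range $\alpha \in [0,1]$.
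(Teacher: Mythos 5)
Your proposal is correct and coincides with the paper's own proof, which likewise obtains inequality (\ref{inequality 2.2}) by substituting $h(t)=t^r$, $f(t)=t^{\alpha}$, and $g(t)=t^{1-\alpha}$ into inequality (\ref{inequality 2.1}). Your additional verification of the hypotheses, including the endpoint cases $\alpha=0$ and $\alpha=1$, is a harmless elaboration of the same one-line specialization.
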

\begin{proof}
Inequality (\ref{inequality 2.2}) follows from inequality (\ref{inequality 2.1}) by putting $ h(t)=t^r $, $f(t)= t^\alpha $, and $ g(t)= t^{1-\alpha}$. 
\end{proof}
\begin{remark}
Let $ B, C \in\mathbb{B}(\mathcal{H}) $. The following lower bound was obtained in \cite{10}. 
\begin{equation*}\label{inequality 2.3}
w^{\frac{1}{2}}(BC) \leq w \left( \begin{bmatrix} 
0& B \\
C& 0 \end{bmatrix}\right). 
\end{equation*}
Also, in the same paper, it was shown that if $ B, C \geq 0$, then
\begin{equation*}
\left\|B^{\frac{1}{2}} C^{\frac{1}{2}} \right\|^2 
= \rho(BC)
\left( \leq w(BC) \right).
\end{equation*}
\end{remark}
\begin{corollary}
Let $ B, C \in\mathbb{B}(\mathcal{H})$, and let $ C $ be normal. Then
\begin{equation*}
\left\|B+ C \right\|^r \leq 2^{r-2} 
\left(\left\|\,\left|B\right|^{2r\alpha}
+\left|B\right|^{2r(1-\alpha)}\right\|
+\left\|\,\left|C \right|^{2r\alpha} + \left|C\right|^{2r(1-\alpha)} \right\|\right), 
\end{equation*}
for all $ \alpha \in [0, 1]$ and $ r \geq 1$.
\end{corollary}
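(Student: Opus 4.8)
The plan is to apply the preceding corollary not to $\begin{bmatrix} 0 & B \\ C & 0\end{bmatrix}$ itself, but to the closely related off-diagonal matrix $T := \begin{bmatrix} 0 & B \\ C^* & 0\end{bmatrix}$, and then to trap its numerical radius between $\tfrac12\|B+C\|$ from below and the right-hand side of the desired inequality from above. The reason for inserting $C^*$ rather than $C$ is structural: the real part of $T$ will produce exactly the combination $B+C$ in its off-diagonal block, while the normality of $C$ will let us rewrite the resulting $|C^*|$ as $|C|$ when we invoke the bound (\ref{inequality 2.2}).

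First I would estimate $w(T)$ from below. By Lemma \ref{lemma 2.4}(a), specializing to $\theta=0$, we have $w(T)\ge \|Re(T)\|$. Since
\[
Re(T)=\frac12\left(T+T^*\right)=\frac12\begin{bmatrix} 0 & B+C \\ (B+C)^* & 0\end{bmatrix},
\]
and the norm of a self-adjoint off-diagonal block matrix $\begin{bmatrix} 0 & Y \\ Y^* & 0\end{bmatrix}$ equals $\|Y\|$ (its square is $\mathrm{diag}(YY^*,\,Y^*Y)$, whose norm is $\|Y\|^2$), this gives $w(T)\ge \tfrac12\|B+C\|$. Because $r\ge 1$ and $t\mapsto t^r$ is nondecreasing on $[0,\infty)$, raising to the $r$-th power yields $\|B+C\|^r\le 2^r\, w^r(T)$.

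Next I would bound $w^r(T)$ from above by applying inequality (\ref{inequality 2.2}) directly to $T$, whose off-diagonal blocks are $B$ and $C^*$:
\[
w^r(T)\le \frac14\left\|\,|B|^{2r\alpha}+|B|^{2r(1-\alpha)}\right\|+\frac14\left\|\,|C^*|^{2r\alpha}+|C^*|^{2r(1-\alpha)}\right\|.
\]
Here the hypothesis that $C$ is normal enters decisively, since it gives $|C^*|=|C|$, so the second term equals $\tfrac14\big\|\,|C|^{2r\alpha}+|C|^{2r(1-\alpha)}\big\|$. Combining the two estimates,
\[
\|B+C\|^r\le 2^r\, w^r(T)\le 2^{r-2}\left(\left\|\,|B|^{2r\alpha}+|B|^{2r(1-\alpha)}\right\|+\left\|\,|C|^{2r\alpha}+|C|^{2r(1-\alpha)}\right\|\right),
\]
which is exactly the asserted inequality.

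The only genuinely nonroutine step is the choice of the auxiliary matrix $T$: one must insert $C^*$ so that $Re(T)$ carries $B+C$, and then rely on normality of $C$ to pull the upper bound back to $|C|$. I expect this to be the main point to get right, since applying (\ref{inequality 2.2}) to the naive matrix $\begin{bmatrix} 0 & B \\ C & 0\end{bmatrix}$ would instead control $\|B+C^*\|^r$, which need not equal $\|B+C\|^r$ even for normal $C$. Everything else—the norm formula for off-diagonal self-adjoint matrices, the monotonicity of $t\mapsto t^r$, and the invocation of (\ref{inequality 2.2})—is routine.
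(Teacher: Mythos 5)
Your proof is correct and follows essentially the same route as the paper: both introduce the auxiliary matrix $\begin{bmatrix} 0 & B \\ C^* & 0\end{bmatrix}$, bound $\|B+C\|$ by $2w$ of that matrix via its real part and Lemma \ref{lemma 2.4}(a), apply inequality (\ref{inequality 2.2}), and finish with $|C^*|=|C|$ from normality. The only cosmetic difference is that the paper phrases the lower bound as $\|Re(T)\|=w(Re(T))\le \max_\theta\|Re(e^{i\theta}T)\|$ rather than specializing $\theta=0$ directly.
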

\begin{proof}
We have
\begin{align*}
\left\|B+ C\right\|^r & = \left\|\begin{bmatrix}
0& B \\
C^*& 0 \end{bmatrix}+ \begin{bmatrix}
0& B \\
C^*& 0 \end{bmatrix}^* \right\|^r \\
& = 2^r w^r\left(Re \begin{bmatrix}
0& B \\
C^*& 0 \end{bmatrix} \right) \\
& \leq 2^r \max_{\theta \in \mathbb{R}} \left\| Re\left(e^{i\theta} \begin{bmatrix}
0& B \\
C^*& 0 \end{bmatrix}\right)\right\|^r\\
&= 2^r w^r\left(\begin{bmatrix}
0& B \\
C^*& 0 \end{bmatrix}\right) \tag{by Lemma \ref{lemma 2.4} (a)}\\
& \leq 2^{r-2} \left(\left\|\,\left|B\right|^{2r\alpha}
+\left|B\right|^{2r(1-\alpha)}\right\|
+ \left\|\,\left|C^* \right|^{2r\alpha} + \left|C^*\right|^{2r(1-\alpha)} \right\|\right).
\tag{by inequality (\ref{inequality 2.2})} 
\end{align*}
Since $ C $ is normal, $ \left| C\right| = \left| C^* \right| $, and the proof is complete. 
\end{proof}
\begin{theorem} \label{theorem 2.9}
Let 
$ S= \begin{bmatrix}
0& B \\
C& 0 \end{bmatrix} \in\mathbb{B}(\mathcal{H} \oplus \mathcal{H})$, $ r \geq 2 $, and $p, q > 1$ with $\dfrac{1}{p} + \dfrac{1}{q}=1$. If 
$f_1$, $g_1$, $f_2$, and $g_2$ are non-negative continuous functions on $[0, \infty)$ such that $f_1(t)g_1(t)= f_2(t)g_2(t)=t \ (t \geq 0)$, then
\begin{equation}\label{inequality 2.4}
w^r(S) \leq 2^{-\frac{r}{2}-1}{\max}^{\frac{1}{p}} \left( \alpha,
\beta \right) 
{\max}^{\frac{1}{q}} \left(\gamma, \delta \right), 
\end{equation}
and
\begin{equation}\label{inequality 2.5}
w^r(S) \leq 2^{-\frac{r}{2}-1} 
{\max}^{\frac{1}{p}}\left(\alpha^{'}, \beta^{'} \right) 
{\max}^{\frac{1}{q}}\left(\gamma^{'}, \delta^{'}\right), 
\end{equation}
where\\
{\small 
$ \alpha= \left\| f_1^{rp} \left( \left| B^*- iC \right| \right) +
f_2^{rp} \left( \left| B^*+ iC \right| \right)\right\|, \qquad
\beta= \left\| f_1^{rp} \left( \left| B+i C^* \right| \right) + f_2^{rp} \left( \left| B- iC^* \right| \right) \right\| $,\\
$ \gamma= \left\| g_1^{rq} \left( \left| B^*- iC \right| \right) + g_2^{rq} \left( \left| B^*+ iC \right| \right)\right\|, \qquad
\delta= \left\| g_1^{rq} \left( \left| B+i C^* \right| \right) + g_2^{rq} \left( \left| B-i C^* \right| \right)\right\|, $\\
}
and\\
{\small 
$ \alpha^{'} = \left\| f_1^{rp} \left( \left| B^*- iC \right| \right) + g_2^{rp} \left( \left| B^*+ iC \right| \right)\right\|,\qquad
\beta^{'}= \left\| f_1^{rp} \left( \left| B+i C^* \right| \right) + g_2^{rp} \left( \left| B-i C^* \right| \right)\right\| $,\\
$ \gamma^{'}= \left\| g_1^{rq} \left( \left| B^*- iC \right| \right) + f_2^{rq} \left( \left| B^*+ iC \right| \right)\right\|,\qquad
\delta^{'}= \left\| g_1^{rq} \left( \left| B+i C^* \right| \right) + f_2^{rq} \left( \left| B- iC^* \right| \right)\right\|.$
}
\end{theorem}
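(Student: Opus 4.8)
The plan is to reduce $w^{r}(S)$, via the Cartesian decomposition of $S$ together with the El-Haddad--Kittaneh bound (\ref{inequality 1.7}), to the norm of a single block-diagonal operator assembled from $B^{*}\mp iC$ and $B\pm iC^{*}$, and then to insert the functions $f_j,g_j$ by a H\"older argument on the functional calculus. Throughout I use that $r\ge 2$ precisely so that (\ref{inequality 1.7}) applies.

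\textbf{Step 1 (reduction).} First I would write $S=P+iQ$ for the Cartesian decomposition, where $P=\frac{S+S^{*}}{2}$ and $Q=\frac{S-S^{*}}{2i}$ are self-adjoint. Since $S^{*}=\begin{bmatrix}0&C^{*}\\ B^{*}&0\end{bmatrix}$, a direct computation gives
\[
P+Q=\frac12\begin{bmatrix}0&(1-i)(B+iC^{*})\\ (1+i)(B^{*}-iC)&0\end{bmatrix},\qquad
P-Q=\frac12\begin{bmatrix}0&(1+i)(B-iC^{*})\\ (1-i)(B^{*}+iC)&0\end{bmatrix}.
\]
Each is a self-adjoint off-diagonal block $\frac12\begin{bmatrix}0&Y\\ Y^{*}&0\end{bmatrix}$, whose absolute value is the block-diagonal $\frac12\,\mathrm{diag}(|Y^{*}|,|Y|)$; using $|cT|=|c|\,|T|$ with $|1\pm i|=\sqrt2$ one obtains
\[
|P+Q|^{r}+|P-Q|^{r}=2^{-\frac r2}\begin{bmatrix}|B^{*}-iC|^{r}+|B^{*}+iC|^{r}&0\\ 0&|B+iC^{*}|^{r}+|B-iC^{*}|^{r}\end{bmatrix}.
\]
Applying (\ref{inequality 1.7}) to $S=P+iQ$ and using that the norm of a positive block-diagonal operator is the maximum of the diagonal norms then yields
\[
w^{r}(S)\le 2^{-\frac r2-1}\max\!\left(\big\||B^{*}-iC|^{r}+|B^{*}+iC|^{r}\big\|,\ \big\||B+iC^{*}|^{r}+|B-iC^{*}|^{r}\big\|\right).
\]

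\textbf{Step 2 (inserting $f_j,g_j$).} Next I would fix a unit vector $u$ and set $A_1=|B^{*}-iC|$, $A_2=|B^{*}+iC|$. From $f_j(t)g_j(t)=t$ and the functional calculus, $A_j^{r}=f_j^{r}(A_j)\,g_j^{r}(A_j)$ with the two factors positive and commuting, so the classical H\"older inequality applied to the probability measure $t\mapsto\langle E_{A_j}(t)u,u\rangle$ (the spectral measure of $A_j$ in the state $u$) gives $\langle A_j^{r}u,u\rangle\le\langle f_j^{rp}(A_j)u,u\rangle^{1/p}\,\langle g_j^{rq}(A_j)u,u\rangle^{1/q}$ for $j=1,2$. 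Summing over $j$ and invoking the discrete H\"older inequality (Lemma 2.1(b)), then bounding each resulting bracket by its operator norm $\alpha,\gamma$, produces $\langle A_1^{r}u,u\rangle+\langle A_2^{r}u,u\rangle\le\alpha^{1/p}\gamma^{1/q}$; taking the supremum over $u$ bounds the first max-entry by $\alpha^{1/p}\gamma^{1/q}$, and repeating with $A_1=|B+iC^{*}|$, $A_2=|B-iC^{*}|$ bounds the second by $\beta^{1/p}\delta^{1/q}$. Since $\max(\alpha^{1/p}\gamma^{1/q},\beta^{1/p}\delta^{1/q})\le\max(\alpha,\beta)^{1/p}\max(\gamma,\delta)^{1/q}$, inequality (\ref{inequality 2.4}) follows. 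For (\ref{inequality 2.5}) I would run the identical argument but split the second operator of each pair with the roles of $f_2$ and $g_2$ interchanged, i.e.\ write $A_2^{r}=g_2^{r}(A_2)f_2^{r}(A_2)$ and place $g_2^{rp}$ and $f_2^{rq}$ in the $\frac1p$- and $\frac1q$-slots; this produces exactly $\alpha',\beta',\gamma',\delta'$.

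The step I expect to be the main obstacle is Step 1: correctly factoring the scalars $1\pm i$ out of the $2\times2$ blocks and tracking which of $|X|$ and $|X^{*}|$ occupies each diagonal slot, so that the four operators $|B^{*}\mp iC|$ and $|B\pm iC^{*}|$ appear with the right signs and the constant collapses cleanly to $2^{-\frac r2-1}$. (Alternatively, one may bypass (\ref{inequality 1.7}) and re-derive the Step 1 bound directly by the polarization-identity technique used in the proof of Theorem \ref{theorem 2.5}, which is likely how the primed and unprimed versions are meant to be obtained in parallel.) Once the bookkeeping of Step 1 is settled, the two H\"older inequalities of Step 2 are routine.
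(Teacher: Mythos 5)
Your argument is correct, and its skeleton --- Cartesian decomposition $S=P+iQ$, passage to $P\pm Q$, identification of $|P\pm Q|$ as block-diagonal operators built from $|B^*\mp iC|$ and $|B\pm iC^*|$ (your factorizations of $1\pm i$ and the placement of $|Y|$ versus $|Y^*|$ on the diagonal all check out against the paper's displayed blocks), and a H\"older interpolation to insert $f_j,g_j$ --- is the same as the paper's; the difference lies in which lemmas carry the load at each stage. Where you invoke the El-Haddad--Kittaneh bound (\ref{inequality 1.7}) as a black box to get the intermediate estimate
$w^r(S)\le 2^{-\frac r2-1}\max\bigl(\bigl\||B^*-iC|^r+|B^*+iC|^r\bigr\|,\ \bigl\||B+iC^*|^r+|B-iC^*|^r\bigr\|\bigr)$,
the paper re-derives that content pointwise for a unit vector $x$ from the identity $\langle S_1x,x\rangle^2+\langle S_2x,x\rangle^2=\tfrac12\bigl(\langle(S_1+S_2)x,x\rangle^2+\langle(S_1-S_2)x,x\rangle^2\bigr)$, the convexity of $t^{r/2}$ (where $r\ge 2$ enters), and the operator Jensen inequality (Lemma \ref{lemma 2.2}); and where you obtain $\langle A_j^{r}u,u\rangle\le\langle f_j^{rp}(A_j)u,u\rangle^{1/p}\langle g_j^{rq}(A_j)u,u\rangle^{1/q}$ in one stroke by scalar H\"older against the spectral measure, the paper reaches the same point via the generalized mixed Cauchy--Schwarz lemma followed by two applications of Lemma \ref{lemma 2.2} and the discrete H\"older inequality. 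Your route is more modular and in fact records a slightly sharper intermediate conclusion, namely $w^r(S)\le 2^{-\frac r2-1}\max\bigl(\alpha^{1/p}\gamma^{1/q},\beta^{1/p}\delta^{1/q}\bigr)$, from which (\ref{inequality 2.4}) follows by $\max(\alpha^{1/p}\gamma^{1/q},\beta^{1/p}\delta^{1/q})\le\max(\alpha,\beta)^{1/p}\max(\gamma,\delta)^{1/q}$; your device for the primed version --- exchanging which of $f_2^{r},g_2^{r}$ occupies the $p$- and $q$-slots --- is exactly how the paper obtains (\ref{inequality 2.5}).
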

\begin{proof}
Assume that $ S= S_1 +iS_2 $ is the Cartesian decomposition of $ S $, and that $x$ is any unit vector in 
$\mathcal{H} \oplus \mathcal{H}$. Then
\begin{align*}
\left| \left\langle Sx, x \right\rangle\right|^r 
& = \left|\left\langle \left( S_1 + iS_2\right)x, x \right\rangle \right|^r \\
&=\left(\left\langle S_1 x, x \right\rangle^2 + \left\langle S_2 x,x \right\rangle^2 \right)^{\frac{r}{2}} \\
& = 2^{-\frac{r}{2}}\left(\left\langle(S_1+ S_2)x, x \right\rangle^2 + \left\langle (S_1- S_2)x,x \right\rangle^2 \right)^{\frac{r}{2}} \\
& \leq 2^{-\frac{r}{2}} 2^{\frac{r}{2}-1}\left(\left| \left\langle(S_1 + S_2)x, x \right\rangle \right| ^r +\left|\left\langle (S_1 - S_2)x,x \right\rangle \right| ^r \right)
\tag{by the convexity of $ t^{\frac{r}{2}}$ for $r \geq 2$} \\
& \leq \dfrac{1}{2} \left( \left\langle \left| S_1+ S_2 \right| x, x \right\rangle ^r + \left\langle \left| S_1- S_2 \right| x, x \right\rangle ^r \right). 
\tag{by the convexity of $ \left|t\right|$ and Lemma \ref{lemma 2.2}}
\end{align*}
A straightforward computation shows that
{\footnotesize 
\begin{align*}
& 2 \left| \left\langle Sx, x \right\rangle\right|^r \\
& \leq \left\langle \begin{bmatrix}
\dfrac{1}{\sqrt{2}} \left| B^*-iC \right| & 0 \\
0 & \dfrac{1}{\sqrt{2}}\left| B+iC^* \right| 
\end{bmatrix} x, x \right\rangle^r 
+ \left\langle \begin{bmatrix}
\dfrac{1}{\sqrt{2}}\left| B^*+iC \right| & 0 \\
0 & \dfrac{1}{\sqrt{2}}\left| B-iC^* \right| 
\end{bmatrix} x, x \right\rangle^r.
\end{align*}
}
Hence, 
{\footnotesize 
\begin{align*}
& 2^{\frac{r}{2}+ 1}\left|\left\langle Sx, x\right\rangle\right|^r \\
& \leq \left\langle \begin{bmatrix}
\left| B^*-iC \right| & 0 \\
0 & \left| B+iC^* \right| 
\end{bmatrix} x, x \right\rangle^r 
+ \left\langle \begin{bmatrix}
\left| B^*+iC \right| & 0 \\
0 & \left| B-iC^* \right| 
\end{bmatrix} x, x \right\rangle^r\\
& \leq \left\langle \begin{bmatrix}
f_1^2 \left( \left| B^*-iC \right|\right) & 0 \\
0 & f_1^2 \left( \left| B+iC^* \right| \right) 
\end{bmatrix} x, x \right\rangle^{\frac{r}{2}} 
\left\langle \begin{bmatrix}
g_1^2 \left( \left| B^*-iC \right|\right) & 0 \\
0 & g_1^2 \left( \left| B+iC^* \right| \right) 
\end{bmatrix} x, x \right\rangle^{\frac{r}{2}} \\
& \quad + \left\langle \begin{bmatrix}
f_2^2 \left( \left| B^*+iC \right|\right) & 0 \\
0 & f_2^2 \left( \left| B-iC^* \right| \right) 
\end{bmatrix} x, x \right\rangle^{\frac{r}{2}} 
\left\langle \begin{bmatrix}
g_2^2 \left( \left| B^*+iC \right|\right) & 0 \\
0 & g_2^2 \left( \left| B-iC^* \right| \right) 
\end{bmatrix} x, x \right\rangle^{\frac{r}{2}} 
\tag{by the mixed Cauchy--Schwarz inequality}
\end{align*}
\begin{align*}
&\leq \left\langle \begin{bmatrix}
f_1^r \left( \left| B^*-iC \right|\right) & 0 \\
0 & f_1^r \left( \left| B+iC^* \right| \right) 
\end{bmatrix} x, x \right\rangle
\left\langle \begin{bmatrix}
g_1^r \left( \left| B^*-iC \right|\right) & 0 \\
0 & g_1^r \left( \left| B+iC^* \right| \right) 
\end{bmatrix} x, x \right\rangle \\
& \quad + \left\langle \begin{bmatrix}
f_2^r \left( \left| B^*+iC \right|\right) & 0 \\
0 & f_2^r \left( \left| B-iC^* \right| \right) 
\end{bmatrix} x, x \right\rangle
\left\langle \begin{bmatrix}
g_2^r \left( \left| B^*+iC \right|\right) & 0 \\
0 & g_2^r \left( \left| B-iC^* \right| \right) 
\end{bmatrix} x, x \right\rangle
\tag{by Lemma \ref{lemma 2.2}}\\
& \leq \left\langle \begin{bmatrix}
f_1^{rp} \left( \left| B^*- iC \right| \right) +
f_2^{rp} \left( \left| B^*+ iC \right| \right) & 0 \\
0& f_1^{rp} \left( \left| B+i C^* \right| \right) 
+ f_2^{rp} \left( \left| B- iC^* \right| \right)
\end{bmatrix} x, x \right\rangle^{\frac{1}{p}}\\
& \quad \times \left\langle \begin{bmatrix}
g_1^{rq} \left( \left| B^*- iC \right| \right) + g_2^{rq} \left( \left| B^*+ iC \right| \right)& 0 \\
0& g_1^{rq} \left( \left| B+i C^* \right| \right) + g_2^{rq} \left( \left| B-i C^* \right| \right)
\end{bmatrix} x, x \right\rangle^{\frac{1}{q}}.
\tag{by the H\"{o}lder inequality and Lemma \ref{lemma 2.2}} 
\end{align*}
}
Taking the supremum over all unit vectors $x$ we get inequality (\ref{inequality 2.4}). Inequality 
(\ref{inequality 2.5}) is achieved by a similar argument.
\end{proof}
\section{Inequalities for the diagonal part}
In this section, we obtain some upper bounds for the numerical radius of diagonal operator matrices.
\begin{theorem}\label{theorem 3.1}
Let 
$ T= \begin{bmatrix}
A& 0 \\
0& D \end{bmatrix} \in\mathbb{B}(\mathcal{H} \oplus \mathcal{H})$, and let $f$ and $g$ be non-negative continuous functions on $ [0, \infty )$ such that $ f(t)g(t)=t \ (t \geq 0) $.
Then for all non-negative nondecreasing convex functions $h$ on 
$ [0, \infty ) $, the following inequality holds:
{\small 
\begin{equation}\label{inequality 3.1}
 h(w(T)) \leq 
\dfrac{1}{2} \max \left( \left\| h\left( f^2(\left| A \right|)\right) + 
h\left( g^2(\left| A \right|)\right)\right\|
, \left\| h\left( f^2(\left| D \right| )\right) + 
h\left( g^2(\left| D \right|)\right) \right\| \right).
\end{equation}
}
\end{theorem}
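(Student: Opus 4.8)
The plan is to reduce the block estimate to a single-operator bound. First I would invoke Lemma \ref{lemma 2.4}(b) to write $w(T) = \max(w(A), w(D))$, and then use that $h$ is nondecreasing to pass the maximum outside, namely $h(w(T)) = \max\bigl(h(w(A)), h(w(D))\bigr)$. Thus it suffices to prove, for an arbitrary operator $A \in \mathbb{B}(\mathcal{H})$, the scalar-level estimate $h(w(A)) \le \frac{1}{2}\|h(f^2(|A|)) + h(g^2(|A|))\|$ together with the identical statement for $D$; taking the maximum of the two then reproduces the right-hand side of (\ref{inequality 3.1}) exactly.

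The heart of the argument is the intermediate bound $w(A) \le \frac{1}{2}\|f^2(|A|) + g^2(|A|)\|$, which I would obtain by mimicking the computation in Theorem \ref{theorem 2.5}. Writing the polar decomposition $A = U|A|$ and using functional calculus to factor $|A| = f(|A|)g(|A|)$, for a unit vector $x$ and $\theta \in \mathbb{R}$ I would shift the adjoint across to get $Re\langle e^{i\theta}Ax, x\rangle = Re\langle e^{i\theta}g(|A|)x, f(|A|)U^*x\rangle$. Applying the polarization identity and discarding the subtracted square gives $Re\langle e^{i\theta}Ax, x\rangle \le \frac{1}{4}\|e^{i\theta}g(|A|)x + f(|A|)U^*x\|^2$. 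I would then recognize the vector inside as the row operator $R_\theta = \begin{bmatrix} e^{i\theta}g(|A|) & f(|A|)U^* \end{bmatrix}$ applied to $(x,x)$; since $\|(x,x)\| = \sqrt{2}$, the estimate becomes $\le \frac{1}{2}\|R_\theta\|^2$. Computing $\|R_\theta\|^2 = \|R_\theta R_\theta^*\| = \|g^2(|A|) + f(|A|)U^*Uf(|A|)\|$ and using $U^*U \le I$ to replace $f(|A|)U^*Uf(|A|)$ by $f^2(|A|)$ yields $Re\langle e^{i\theta}Ax, x\rangle \le \frac{1}{2}\|f^2(|A|)+g^2(|A|)\|$; taking the supremum over $x$ and $\theta$ and invoking Lemma \ref{lemma 2.4}(a) completes this step.

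Finally, since $h$ is nondecreasing and convex, I would write $h(w(A)) \le h\bigl(\|\tfrac{f^2(|A|)+g^2(|A|)}{2}\|\bigr)$ and apply Lemma \ref{rooin1} to the positive operators $f^2(|A|)$ and $g^2(|A|)$, obtaining $h(w(A)) \le \|\tfrac{h(f^2(|A|))+h(g^2(|A|))}{2}\| = \frac{1}{2}\|h(f^2(|A|))+h(g^2(|A|))\|$. Doing the same for $D$ and taking the maximum gives (\ref{inequality 3.1}). The step I expect to require the most care is tracking the constant: unlike the off-diagonal case of Theorem \ref{theorem 2.5}, where polarization feeds a genuine unit vector $(x_1,x_2)$ and produces the factor $\tfrac{1}{4}$, here both slots of the row operator see the same component, forcing the vector $(x,x)$ of norm $\sqrt{2}$, and it is precisely this doubling that converts $\tfrac{1}{4}$ into the $\tfrac{1}{2}$ appearing in (\ref{inequality 3.1}). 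I would also check that Lemma \ref{rooin1} is applied only to positive operators, which is guaranteed since $f,g \ge 0$ forces $f^2(|A|), g^2(|A|) \ge 0$.
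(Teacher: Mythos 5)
Your proof is correct, and the core computation (polar decomposition $A=U|A|$, factoring $|A|=f(|A|)g(|A|)$, polarization, and the row-operator norm identity $\|R_\theta R_\theta^*\| = \|g^2(|A|)+f(|A|)U^*Uf(|A|)\|$, followed by Lemma \ref{rooin1}) is the same as the paper's. The organizational difference is real, though: the paper never invokes Lemma \ref{lemma 2.4}(b). It runs the entire estimate at the block level with a genuine unit vector $x=(x_1,x_2)$, obtains $Re\langle e^{i\theta}Tx,x\rangle \le \tfrac{1}{2}\alpha^2\|x_1\|^2 + \tfrac{1}{2}\beta^2\|x_2\|^2$, and then extracts the maximum via the optimization $\max_{\|x_1\|^2+\|x_2\|^2=1}(\alpha^2\|x_1\|^2+\beta^2\|x_2\|^2)=\max(\alpha^2,\beta^2)$. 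By reducing first to $w(T)=\max(w(A),w(D))$ and $h(\max(a,b))=\max(h(a),h(b))$, you replace that weighted-maximum step with a one-line appeal to a stated lemma and only ever handle a single operator; this is cleaner and loses nothing, since the single-operator bound $w(A)\le\tfrac{1}{2}\|f^2(|A|)+g^2(|A|)\|$ is exactly the $A=D$ case of the block statement. Your tracking of the constant is also accurate: the vector $(x,x)$ of norm $\sqrt{2}$ is precisely what turns the polarization factor $\tfrac14$ into the $\tfrac12$ of (\ref{inequality 3.1}), mirroring the paper's $\begin{bmatrix}x_1\\ x_1\end{bmatrix}$ step. One minor point in your favor: you correctly use only the inequality $U^*U\le I$ to pass from $f(|A|)U^*Uf(|A|)$ to $f^2(|A|)$, whereas the paper asserts equality there, which can fail (e.g., when $f(0)\neq 0$ and $A$ has nontrivial kernel); only the inequality is needed, so both proofs stand.
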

\begin{proof}
Let $ A= U \left| A \right| $ and $ D= V \left| D \right| $ be the polar decompositions of the operators $A$ and $D$. Then 
\begin{equation*}
T= W \left| T \right|=
\begin{bmatrix}
U& 0 \\
0& V \end{bmatrix}
\begin{bmatrix}
\left| A \right|& 0 \\
0 & \left| D \right| \end{bmatrix}
\end{equation*}
is the polar decomposition of $ T $.
Let $x=(x_1, x_2)$ be any unit vector in 
$\mathcal{H} \oplus \mathcal{H}$; 
that is, $ \left\| x_1\right\| ^2 + \left\| x_2\right\|^2=1$. 
Then for all $ \theta \in \mathbb{R}$, we obtain 
\begin{align*}
& Re\left\langle e^{i \theta} Tx, x \right\rangle \\
& \quad= Re \left\langle e^{i \theta} W \left| T \right| x, x \right\rangle\\
& \quad= Re\left\langle e^{i \theta} W f(\left| T \right|) g(\left| T \right|) x, x \right\rangle 
\tag{by functional calculus} \\
& \quad= Re\left\langle e^{i \theta} g(\left| T \right|) x, f(\left| T \right|) W^* x \right\rangle \\
& \quad= Re\left\langle e^{i \theta}
\begin{bmatrix}
g(\left| A \right|) & 0 \\
0 & g(\left| D \right|) 
\end{bmatrix}
\begin{bmatrix}
x_1\\x_2
\end{bmatrix}, 
\begin{bmatrix}
f(\left| A \right|) & 0 \\
0 & f(\left| D \right|) 
\end{bmatrix}
\begin{bmatrix}
U^* & 0\\
0 & V^*
\end{bmatrix}
\begin{bmatrix}
x_1\\x_2
\end{bmatrix}
\right\rangle \\
& \quad= Re\left\langle e^{i \theta}\left( g(\left| A \right|)x_1, g(\left| D \right|) x_2 \right), \left( f(\left| A \right|)U^* x_1, f(\left| D \right|) V^* x_2 \right) \right\rangle \\
& \quad = Re \left(\left\langle e^{i \theta}g(\left| A \right|) x_1, f(\left| A \right|) U^* x_1\right\rangle + \left\langle e^{i \theta} g(\left| D \right|) x_2, f(\left| D \right|) V^* x_2 \right\rangle \right) \\
& \quad = \dfrac{1}{4} \left( \left\| e^{i \theta}g(\left| A\right|) x_1 + f(\left| A \right|) U^* x_1 \right\|^2 - \left\| e^{i \theta}g(\left| A \right|) x_1- f(\left| A \right|) U^* x_1 \right\|^2 \right) \\
& \qquad + \dfrac{1}{4}\left( \left\| e^{i \theta} g(\left| D \right|) x_2+ 
f(\left| D \right|) V^* x_2 \right\|^2 - \left\| e^{i \theta} g(\left| D \right|) x_2- f(\left| D \right|) V^* x_2 \right\|^2 \right) 
\tag{by the polarization identity} 
\end{align*}
\begin{align*}
& \quad \leq \dfrac{1}{4}\left\| e^{i \theta}g(\left|A \right|) x_1 + f(\left| A \right|) U^* x_1 \right\|^2
+\dfrac{1}{4}\left\| e^{i \theta} g(\left| D \right|) x_2+ 
f(\left| D \right|) V^* x_2 \right\|^2 \\
& \quad = \dfrac{1}{4} \left\|
\begin{bmatrix} 
e^{i \theta}g(\left| A \right|) \quad f(\left| A \right|) U^*
\end{bmatrix}
\begin{bmatrix}
x_1\\ x_1
\end{bmatrix}
\right\|^2 
+ \dfrac{1}{4} \left\|
\begin{bmatrix} 
e^{i \theta}g(\left| D \right|) \quad f(\left|D\right|) V^*
\end{bmatrix}
\begin{bmatrix}
x_2\\ x_2
\end{bmatrix}
\right\|^2 \\
& \quad\leq \dfrac{1}{2} 
\left\| \begin{bmatrix} 
e^{i \theta}g(\left| A \right|) \quad f(\left| A \right|) U^*
\end{bmatrix}\right\| ^2 \left\|x_1 \right\|^2 
+ \dfrac{1}{2} 
\left\| \begin{bmatrix} 
e^{i \theta}g(\left| D \right|)\quad f(\left| D \right|) V^* 
\end{bmatrix}\right\|^2\left\|x_2 \right\|^2.
\end{align*}
Let 
$
\alpha:= \left\| \begin{bmatrix} 
e^{i \theta}g(\left| A \right|) \quad f(\left| A \right|) U^*
\end{bmatrix}\right\|,$ and let $
\beta:= \left\| \begin{bmatrix} 
e^{i \theta}g(\left| D \right|)\quad f(\left| D \right|) V^* 
\end{bmatrix}\right\|.$ Clearly,
{\small 
\begin{equation*}
\max_{\left\| x_1\right\|^2 + \left\| x_2\right\|^2 = 1} \left( \alpha^2 \left\| x_1\right\|^2 + \beta^2 \left\| x_2\right\|^2 \right) 
= \max_{\theta \in [0, \frac{\pi}{2}]} \left(\alpha^2 \sin^2\theta + \beta^2 \cos^2\theta \right) 
= \max \left( \alpha^2, \beta^2 \right).
\end{equation*}
}
Hence,
{\footnotesize 
\begin{align*}
& Re\left\langle e^{i \theta} Tx, x \right\rangle \\
&\leq \frac{1}{2} \max 
\left( 
\left\| \begin{bmatrix} 
e^{i \theta}g(\left| A \right|) \quad f(\left| A \right|) U^*
\end{bmatrix}\right\|^2,
\left\| \begin{bmatrix} 
e^{i \theta}g(\left| D \right|)\quad f(\left| D \right|) V^* 
\end{bmatrix}\right\|^2\right) \\
& = \frac{1}{2} \max
\left(  \left\| \begin{bmatrix} 
e^{i \theta}g(\left| A \right|) \quad f(\left| A \right|) U^*
\end{bmatrix} 
\begin{bmatrix}
e^{-i \theta}g(\left| A \right|) \\ U f(\left| A \right|) \end{bmatrix} \right\|,
\left\| \begin{bmatrix} 
e^{i \theta}g(\left| D \right|)\quad f(\left| D \right|) V^* 
\end{bmatrix}
\begin{bmatrix}
e^{-i \theta}g(\left| D \right|) \\ V f(\left| D \right|) \end{bmatrix} \right\| \right) \\
&= \dfrac{1}{2} \max
\left(  \left\|g^2(\left| A \right|)+ f(\left| A \right|)U^* U f(\left| A \right|)\right\|, 
\left\|g^2(\left| D \right|) + f(\left| D \right|) V^*V f(\left| D \right|)\right\| \right) \\
& = \dfrac{1}{2}\max \left( \left\|f^2(\left| A \right|)+g^2( \left| A \right|)\right\|
, \left\|f^2(\left| D \right|)+ g^2(\left| D \right|) \right\|\right).
\end{align*}
}
Taking the supremum over all unit vectors $ x=(x_1, x_2)$ and using Lemma \ref{lemma 2.4} (a), yields that 
\begin{equation*}
w(T) \leq \dfrac{1}{2}\max \left( \left\|f^2(\left| A \right|)+g^2( \left| A \right|)\right\|
, \left\|f^2(\left| D \right|)+ g^2(\left| D \right|) \right\| \right).
\end{equation*} 
Therefore,
\begin{align*}
h(w(T)) & \leq \max \left( h\left(\left\| \dfrac{f^2(\left| A \right|)+g^2(\left| A \right|)}{2}\right\|\right) 
, h\left(\left\| \dfrac{ f^2(\left| D \right|)+ g^2(\left| D \right|) }{2} \right\| \right) \right) 
\tag{since \textit{h} is nondecreasing }\\
& \leq \dfrac{1}{2} \max \left( \left\| h\left( f^2(\left| A \right|)\right)+ h\left( g^2(\left| A \right|)\right) \right\| 
, \left\| h\left( f^2(\left| D \right|)\right)+ h\left( g^2(\left| D \right|)\right) \right\| \right)
\tag{by Lemma \ref{rooin1}.}
\end{align*}
\end{proof}
\begin{corollary}
Let $ A, D \in\mathbb{B}(\mathcal{H})$. Then 
\begin{equation}\label{inequality 3.2}
w^r\left(\begin{bmatrix}
A& 0 \\
0& D \end{bmatrix}\right) 
\leq \dfrac{1}{2} \max 
\left( \left\|\,\left| A\right|^{2r\alpha} + 
\left| A \right|^{2r(1-\alpha)} \right\|
,\left\|\,\left| D\right|^{2r\alpha} + 
\left| D \right|^{2r(1-\alpha)} \right\| \right),
\end{equation}
and in particular,
\begin{equation}\label{inequality 3.3}
w^r (A)\leq 
\dfrac{1}{2} \left\| \ \left|A\right|^{2r\alpha} + \left| A \right|^{2r(1-\alpha)} \ \right\|,
\end{equation}
for all $ r \geq 1$ and 
$ \alpha \in [0, 1]$. 
\end{corollary}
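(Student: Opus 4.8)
The plan is to obtain both inequalities as immediate specializations of Theorem \ref{theorem 3.1}, choosing the functions so that the abstract functional-calculus expressions collapse into powers of the absolute values. Concretely, I would set $h(t)=t^{r}$, $f(t)=t^{\alpha}$, and $g(t)=t^{1-\alpha}$.

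First I would verify that these choices meet the hypotheses of Theorem \ref{theorem 3.1}. For $\alpha\in[0,1]$ the functions $f$ and $g$ are non-negative and continuous on $[0,\infty)$, and $f(t)g(t)=t^{\alpha}t^{1-\alpha}=t$, so the factorization required by the generalized mixed Cauchy--Schwarz step is respected. For $r\geq 1$ the function $h(t)=t^{r}$ is non-negative, nondecreasing, and convex on $[0,\infty)$, which is precisely what Theorem \ref{theorem 3.1} demands; note that no constraint stronger than $r\geq 1$ is needed here.

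Next I would evaluate the compositions appearing in (\ref{inequality 3.1}) via the functional calculus. Since $f^{2}(|A|)=|A|^{2\alpha}$ and $g^{2}(|A|)=|A|^{2(1-\alpha)}$, one gets $h(f^{2}(|A|))=|A|^{2r\alpha}$ and $h(g^{2}(|A|))=|A|^{2r(1-\alpha)}$, and likewise with $D$ in place of $A$. Because $h(w(T))=w^{r}(T)$, substituting these values into (\ref{inequality 3.1}) reproduces (\ref{inequality 3.2}) verbatim.

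Finally, for the particular case (\ref{inequality 3.3}) I would take $D=A$ in (\ref{inequality 3.2}). By Lemma \ref{lemma 2.4}(b) the diagonal block then satisfies $w(T)=\max(w(A),w(A))=w(A)$, while the right-hand maximum is taken over two identical quantities; both sides therefore reduce to the single-operator statement. I do not anticipate a genuine obstacle, since the whole argument is a substitution; the only point meriting a line of justification is the convexity and monotonicity of $t\mapsto t^{r}$ for $r\geq 1$, which is exactly what is needed so that Lemma \ref{rooin1} (invoked inside the proof of Theorem \ref{theorem 3.1}) remains applicable.
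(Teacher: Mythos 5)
Your proposal is correct and is essentially identical to the paper's own proof: the paper likewise substitutes $h(t)=t^{r}$, $f(t)=t^{\alpha}$, $g(t)=t^{1-\alpha}$ into inequality (\ref{inequality 3.1}) and then obtains (\ref{inequality 3.3}) by setting $A=D$ and invoking Lemma \ref{lemma 2.4}(b). Your added checks of the hypotheses (continuity, the factorization $f(t)g(t)=t$, and the monotonicity and convexity of $t^{r}$ for $r\geq 1$) are accurate and harmless elaborations of the same argument.
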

\begin{proof}
Take $ h(t)= t^r $, $f(t)= t^\alpha $, and 
$ g(t)= t^{1-\alpha}$ in inequality (\ref{inequality 3.1}). Inequality (\ref{inequality 3.3}) follows from (\ref{inequality 3.2})
by putting $ A=D $ and considering Lemma \ref{lemma 2.4} (b). 
\end{proof}
\begin{corollary} \label{corollary 3.3}
Let $ A, B, C $, and $ D \in\mathbb{B}(\mathcal{H})$. With the assumptions of Theorem \ref{theorem 3.1}, if 
$ Y=\begin{bmatrix}
A& B \\
C& D \end{bmatrix} $,
then
{\small 
\begin{align}\label{inequality 3.4}
\nonumber h\left( \dfrac{w(Y)}{2}\right) 
& \leq \dfrac{1}{4} \max \left( \left\| h\left( f^2(\left| A \right|)\right) + 
h\left(g^2(\left| A \right|)\right)\right\|
, \left\| h\left( f^2(\left| D \right| )\right) + 
h\left( g^2(\left| D \right|)\right)\right\| \right) \\
& \quad + \dfrac{1}{8} \left(\left\| h\left(f^2(\left| B \right|)\right)+ 
h\left( g^2(\left| B \right| )\right) \right\|
+ \left\| h\left( f^2(\left| C \right| ) \right) + 
h\left( g^2(\left| C \right| )\right) \right\| \right).
\end{align}
}
\end{corollary}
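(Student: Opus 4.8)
The plan is to split $Y$ into its diagonal and off-diagonal parts and then to combine Theorems \ref{theorem 3.1} and \ref{theorem 2.5}. Write
\[
Y = \begin{bmatrix} A & 0 \\ 0 & D \end{bmatrix} + \begin{bmatrix} 0 & B \\ C & 0 \end{bmatrix} =: T + S .
\]
Since $w(\cdot)$ is a norm on $\mathbb{B}(\mathcal{H} \oplus \mathcal{H})$, it is subadditive, so $w(Y) \leq w(T) + w(S)$ and hence $\tfrac{1}{2} w(Y) \leq \tfrac{1}{2}\bigl(w(T) + w(S)\bigr)$.

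First I would use that $h$ is nondecreasing to pass to $h\bigl(\tfrac{w(Y)}{2}\bigr) \leq h\bigl(\tfrac{w(T)+w(S)}{2}\bigr)$, and then that $h$ is convex to split the right-hand side by Jensen's inequality at the midpoint,
\[
h\!\left( \frac{w(T)+w(S)}{2} \right) \leq \frac{h(w(T)) + h(w(S))}{2}.
\]
Next I would estimate the two summands separately: Theorem \ref{theorem 3.1} bounds $h(w(T))$ by $\tfrac12$ times the maximum of the $A$- and $D$-norms appearing in (\ref{inequality 3.1}), and Theorem \ref{theorem 2.5} bounds $h(w(S))$ by $\tfrac14$ times the $B$-norm plus $\tfrac14$ times the $C$-norm appearing in (\ref{inequality 2.1}). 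Inserting both bounds and carrying the outer factor $\tfrac12$ through, the coefficient $\tfrac12$ in front of the maximum becomes $\tfrac14$ and each coefficient $\tfrac14$ in front of the $B$- and $C$-norms becomes $\tfrac18$, which is precisely inequality (\ref{inequality 3.4}).

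The argument is essentially bookkeeping once the decomposition $Y = T + S$ is in place, so there is no genuinely hard step; the only point needing attention is consistency. Both Theorems \ref{theorem 3.1} and \ref{theorem 2.5} must be applied with the \emph{same} pair $f, g$ (and the same $h$) so that the resulting norms align with the statement, which is exactly what the phrase ``with the assumptions of Theorem \ref{theorem 3.1}'' secures. I would also verify that each hypothesis on $h$ is used in its proper place: non-negativity so that the operator norms of the $h(\cdot)$-terms are the intended quantities, monotonicity for the first inequality above, and convexity for the midpoint splitting. I note that only the subadditivity of $w$ is needed for the triangle-inequality step, rather than any of the sharper identities in Lemma \ref{lemma 2.4}.
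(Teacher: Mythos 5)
Your proposal is correct and is exactly the paper's argument: the paper's proof is the one-line instruction ``use the triangle inequality together with the nondecreasingness and the convexity of $h$ and apply Theorems \ref{theorem 2.5} and \ref{theorem 3.1},'' which is precisely the decomposition $Y=T+S$, the midpoint convexity step, and the bookkeeping you carry out. Nothing further is needed.
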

\begin{proof}
Use the triangular inequality together with the nondecreasingness and the convexity of $h$ and apply Theorems \ref{theorem 2.5} and \ref{theorem 3.1} to get the result.
\end{proof}
\begin{corollary}
Let $A, B \in\mathbb{B}(\mathcal{H})$. Then
\begin{align*}
& \max \left( w^r(A \pm B), w^r(A \pm iB)\right) \\
& \qquad\leq 2^{r-2} \left\|\, \left| A \right|^{2r\alpha} + 
\left| A \right|^{2r(1-\alpha)} \right\|
+ 2^{r-2}\left\|\, \left| B \right|^{2r\alpha} + 
\left| B\right|^{2r(1-\alpha)} \right\|,
\end{align*}
for all 
$ \alpha \in [0, 1]$ and $ r \geq 1 $.
\end{corollary}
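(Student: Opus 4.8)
The plan is to reduce the four numerical radii $w^r(A\pm B)$ and $w^r(A\pm iB)$ to the $r$-th powers of the numerical radii of two explicit $2\times 2$ block operators, and then feed those operators into Corollary \ref{corollary 3.3}. First I would set $Y_1=\begin{bmatrix} A & B \\ B & A\end{bmatrix}$ and $Y_2=\begin{bmatrix} A & B \\ -B & A\end{bmatrix}$. By Lemma \ref{lemma 2.4}(c), $w(Y_1)=\max\bigl(w(A+B),w(A-B)\bigr)$, and by Lemma \ref{lemma 2.4}(d), $w(Y_2)=\max\bigl(w(A+iB),w(A-iB)\bigr)$. Because $t\mapsto t^r$ is increasing on $[0,\infty)$, these identities survive raising to the $r$-th power, so that $w^r(Y_1)=\max\bigl(w^r(A+B),w^r(A-B)\bigr)$ and analogously for $Y_2$. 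A direct application of (\ref{inequality 3.3}) to $A+B$ would instead produce a bound in terms of $|A+B|$; the block-matrix detour is precisely what separates the contributions of $A$ and $B$.

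Next I would apply inequality (\ref{inequality 3.4}) to each of $Y_1$ and $Y_2$ with the admissible choices $h(t)=t^r$, $f(t)=t^\alpha$, and $g(t)=t^{1-\alpha}$; these satisfy the hypotheses of Theorem \ref{theorem 3.1} since $f,g\ge 0$ are continuous with $f(t)g(t)=t$, and $h$ is non-negative, nondecreasing and convex for $r\ge 1$. Under this specialization one has $h\bigl(f^2(|X|)\bigr)+h\bigl(g^2(|X|)\bigr)=|X|^{2r\alpha}+|X|^{2r(1-\alpha)}$ for every $X\in\mathbb{B}(\mathcal{H})$, while the left-hand side of (\ref{inequality 3.4}) becomes $2^{-r}w^r(Y_j)$.

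For $Y_1$ the two diagonal blocks are equal (both $A$) and the two off-diagonal blocks are equal (both $B$), so the maximum in the first term of (\ref{inequality 3.4}) reduces to the single $A$-term and the two $\tfrac18$ off-diagonal contributions merge into a single $\tfrac14$ $B$-term; clearing the factor $2^{-r}$ then yields $w^r(Y_1)\le 2^{r-2}\bigl\||A|^{2r\alpha}+|A|^{2r(1-\alpha)}\bigr\|+2^{r-2}\bigl\||B|^{2r\alpha}+|B|^{2r(1-\alpha)}\bigr\|$. For $Y_2$ the only additional ingredient is the identity $|-B|=|B|$ (since $((-B)^*(-B))^{1/2}=(B^*B)^{1/2}$), which makes the $C$-block contribution coincide with the $B$-block one, so $Y_2$ produces exactly the same upper bound.

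Since both block operators give the identical right-hand side, the maximum of all four quantities $w^r(A\pm B)$ and $w^r(A\pm iB)$ is dominated by that common expression, which is precisely the asserted inequality. The argument is essentially bookkeeping; the only place demanding care is the coefficient arithmetic — verifying that the $\tfrac14$ diagonal weight and the two $\tfrac18$ off-diagonal weights of (\ref{inequality 3.4}) combine, after multiplication by $2^r$, into the stated $2^{r-2}$ factors, and confirming via $|-B|=|B|$ that no factor of two is lost in the second block. I do not anticipate a genuine obstacle beyond this routine accounting.
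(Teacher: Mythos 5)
Your proposal is correct and follows essentially the same route as the paper: the paper likewise takes $Y=\begin{bmatrix} A& B \\ \pm B & A \end{bmatrix}$, applies Corollary \ref{corollary 3.3} with $h(t)=t^r$, $f(t)=t^{\alpha}$, $g(t)=t^{1-\alpha}$, combines the two $\tfrac18$ off-diagonal terms via $\lvert -B\rvert=\lvert B\rvert$, clears the $2^{-r}$ factor, and finishes with Lemma \ref{lemma 2.4} (c), (d). Your coefficient bookkeeping matches the paper's exactly.
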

\begin{proof}
Take in Corollary \ref{corollary 3.3}, $Y=\begin{bmatrix}
A& B \\
\pm B & A \\
\end{bmatrix}$, $ h(t) = t^r $, $f(t) = t^\alpha $, $ g(t)= t^{1- \alpha} \ (t \geq 0) $ to get
\begin{align*}
\left(\dfrac{ w \left( \begin{bmatrix}
A& B \\
\pm B & A \\
\end{bmatrix}\right)}{2} \right)^r 
& \leq \frac{1}{4} \left\| \left| A\right| ^{2 r \alpha} + \left| A\right| ^{2 r (1- \alpha)}\right\| \\
& + \frac{1}{8} \left( \left\| \left| B\right| ^{2 r \alpha} + \left| B \right| ^{2 r (1- \alpha)}\right\| + \left\| \left| \pm B\right| ^{2 r \alpha} + \left| \pm B \right| ^{2 r (1- \alpha)}\right\|\right) \\
& = \frac{1}{4} \left\| \left| A\right| ^{2 r \alpha} + \left| A\right| ^{2 r (1- \alpha)}\right\| + \frac{1}{4} \left\| \left| B \right| ^{2 r \alpha} + \left| B \right| ^{2 r (1- \alpha)}\right\|.
\end{align*}
Hence
\begin{equation*}
w^r \left( \begin{bmatrix}
A& B \\
\pm B & A \\
\end{bmatrix}  \right) 
\leq 2^{r-2} \left\| \left| A\right| ^{2 r \alpha} + \left| A\right| ^{2 r (1- \alpha)}\right\| + 2^{r-2} \left\| \left| B \right| ^{2 r \alpha} + \left| B \right| ^{2 r (1- \alpha)}\right\|.
\end{equation*}
Now, an application of (c), (d) of Lemma \ref{lemma 2.4} completes the proof.
\end{proof}
The next result reads as follows.
\begin{theorem}
Let 
$ T= \begin{bmatrix}
A& 0 \\
0& D \end{bmatrix} \in\mathbb{B}(\mathcal{H} \oplus \mathcal{H})$, and let $f$ and $g$ be non-negative continuous functions on $ [0, \infty )$ such that $ f(t)g(t)=t \ (t \geq 0) $.
Then for all non-negative nondecreasing convex functions $h$ on 
$ [0, \infty ) $,
{\footnotesize 
\begin{equation}\label{inequality 3.5}
h(w(T))
\leq \dfrac{1}{2} \max \left( 
\left\| h\left( f^2(\left| A \right| )\right) + 
h\left( g^2(\left| A^* \right| )\right) \right\|
,\left\| h\left( f^2(\left| D \right| )\right) + 
h\left( g^2(\left| D^* \right| )\right) \right\|
\right).
\end{equation}
}
\end{theorem}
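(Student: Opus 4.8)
The plan is to bypass the polar-decomposition-plus-polarization computation of Theorem~\ref{theorem 3.1} and instead invoke the generalized mixed Cauchy--Schwarz inequality, which is precisely the tool that produces an $|A|$-factor together with an $|A^*|$-factor. First I would use Lemma~\ref{lemma 2.4}(b) to reduce the problem to the two diagonal blocks, since $w(T)=\max\left(w(A),w(D)\right)$; it therefore suffices to bound $w(A)$ and $w(D)$ separately and then recombine.

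For the $A$-block, fix a unit vector $x\in\mathcal{H}$ and apply the generalized mixed Cauchy--Schwarz inequality (with $y=x$) to obtain
\[
\left|\left\langle Ax,x\right\rangle\right|\leq \left\langle f^2(\left|A\right|)x,x\right\rangle^{\frac12}\left\langle g^2(\left|A^*\right|)x,x\right\rangle^{\frac12}.
\]
The arithmetic--geometric mean inequality $\sqrt{ab}\leq\frac{a+b}{2}$ then gives
\[
\left|\left\langle Ax,x\right\rangle\right|\leq \frac12\left\langle\left(f^2(\left|A\right|)+g^2(\left|A^*\right|)\right)x,x\right\rangle\leq \frac12\left\|f^2(\left|A\right|)+g^2(\left|A^*\right|)\right\|,
\]
where the last step uses that $f^2(\left|A\right|)+g^2(\left|A^*\right|)$ is positive, so its supremum over unit vectors equals its norm. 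Taking the supremum over $x$ yields $w(A)\leq\frac12\left\|f^2(\left|A\right|)+g^2(\left|A^*\right|)\right\|$, and the identical argument applied to $D$ gives the companion bound. Combining the two via Lemma~\ref{lemma 2.4}(b) produces
\[
w(T)\leq \frac12\max\left(\left\|f^2(\left|A\right|)+g^2(\left|A^*\right|)\right\|,\left\|f^2(\left|D\right|)+g^2(\left|D^*\right|)\right\|\right).
\]

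Finally I would transport this scalar inequality through $h$. By homogeneity of the norm, $\frac12\|X\|=\|X/2\|$, and since $h$ is nondecreasing, $h(\max(u,v))=\max(h(u),h(v))$; hence $h(w(T))$ is at most the maximum of $h\left(\left\|\tfrac{f^2(\left|A\right|)+g^2(\left|A^*\right|)}{2}\right\|\right)$ and the analogous $D$-term. Applying Lemma~\ref{rooin1} to each — with the positive operators $f^2(\left|A\right|)$ and $g^2(\left|A^*\right|)$, respectively $f^2(\left|D\right|)$ and $g^2(\left|D^*\right|)$ — splits $h$ across each sum and yields the claimed bound.

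The only genuine departure from Theorem~\ref{theorem 3.1} is the emergence of $|A^*|$ in place of $|A|$ in the second summand, and this is the step I expect to matter most: the polarization argument there forces the partial isometry to cancel through $U^*U$, leaving $g^2(\left|A\right|)$, whereas the mixed Cauchy--Schwarz inequality is exactly what separates $\left|\left\langle Ax,x\right\rangle\right|$ into an $f(\left|A\right|)$-part and a $g(\left|A^*\right|)$-part. The remaining details are routine; one need only note that $f^2(\left|A\right|)$ and $g^2(\left|A^*\right|)$ are positive, since $f,g\geq0$ and the functional calculus of a positive operator at a non-negative function is positive, so that Lemma~\ref{rooin1} is applicable and the numerical radius of each summand agrees with its operator norm.
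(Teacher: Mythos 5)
Your proof is correct, and it reaches the stated bound by a mildly but genuinely different route than the paper. The core estimate is the same in both arguments: the generalized mixed Cauchy--Schwarz inequality applied to a vector state, followed by the arithmetic--geometric mean inequality, is exactly what produces the $f^2(\left|A\right|)$ and $g^2(\left|A^*\right|)$ summands. Where you diverge is in the order of operations and the auxiliary lemmas. The paper keeps $T$ intact on $\mathcal{H}\oplus\mathcal{H}$, applies $h$ \emph{before} taking any supremum, and pushes $h$ inside the vector states via the operator Jensen inequality (Lemma~\ref{lemma 2.2}), ending with the inner product of a positive $2\times 2$ block-diagonal operator whose norm is the desired maximum; it never invokes Lemma~\ref{lemma 2.4}(b) or Lemma~\ref{rooin1}. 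You instead reduce to the blocks first via $w(T)=\max\left(w(A),w(D)\right)$, establish the $h$-free norm bound $w(A)\leq\frac12\left\|f^2(\left|A\right|)+g^2(\left|A^*\right|)\right\|$ for each block, and only then compose with $h$, splitting it across the sum with Lemma~\ref{rooin1}. Your version is more modular and in fact mirrors the paper's own proof of Theorem~\ref{theorem 3.1} (norm bound first, then $h$ via Lemma~\ref{rooin1}); the paper's version for this theorem is more self-contained, since Lemma~\ref{rooin1} is itself a repackaging of the operator Jensen inequality that the paper here uses directly. All the small hypotheses you flag --- positivity of $f^2(\left|A\right|)$ and $g^2(\left|A^*\right|)$ so that Lemma~\ref{rooin1} applies and the supremum of the quadratic form equals the norm, and monotonicity of $h$ so that $h$ commutes with the maximum --- are checked correctly, so no gap remains.
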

\begin{proof}
Let $x$ be any unit vector in 
$\mathcal{H} \oplus \mathcal{H}$. We observe that 
\begin{align*}
h\left( \left|\right\langle Tx, x \left\rangle \right| \right) 
&\leq h \left( \left\langle f^2(\left| T \right|)x, x \right\rangle ^{\frac{1}{2}} \left\langle g^2(\left| T^* \right|)x, x \right\rangle^{\frac{1}{2}} \right)
\tag{by the mixed Cauchy--Schwarz inequality} \\
& \hspace{-0.5 cm}\leq h \left(\dfrac{\left\langle f^2(\left| T \right|)x, x \right\rangle + \left\langle g^2(\left| T^* \right|)x, x \right\rangle}{2} \right) \tag{by the Young inequality}\\
& \hspace{-0.5 cm} \leq \dfrac{1}{2} \left( h\left( \left\langle f^2(\left| T \right|)x, x \right\rangle\right)
+h\left( \left\langle g^2(\left| T^* \right|)x, x \right\rangle\right)\right) 
\tag{by the convexity of \textit{h}}\\
& \hspace{-0.5 cm} \leq \dfrac{1}{2} \left(\left\langle h \left( f^2(\left| T \right|)\right) x, x \right\rangle
+\left\langle h\left( g^2(\left| T^* \right|)\right) x, x \right\rangle \right) 
\tag{by Lemma \ref{lemma 2.2}} \\
& \hspace{-0.5 cm} = \dfrac{1}{2} \left\langle 
\begin{bmatrix}
h \left( f^2(\left| A \right| )\right)+ h \left( g^2(\left| A^* \right| ) \right) & 0 \\
0 & h \left( f^2(\left| D \right| ) \right)+ h \left( g^2(\left| D^* \right| )\right) 
\end{bmatrix} x,x \right\rangle.
\end{align*}
Taking the supremum over all unit vectors $x$, we reach the required result.
\end{proof}
\begin{corollary}
Let $ A, D \in\mathbb{B}(\mathcal{H})$. Then for all $ r \geq 1$ and 
$ \alpha \in [0, 1]$, 
\begin{equation}\label{inequality 3.6}
w^r \left( \begin{bmatrix}
A& 0 \\
0& D \end{bmatrix}\right) 
\leq \dfrac{1}{2} \max 
\left( \left\|\,\left| A\right|^{2r\alpha} + 
\left| A^* \right|^{2r(1-\alpha)} \right\|
,\left\|\,\left| D\right|^{2r\alpha} + 
\left| D^* \right|^{2r(1-\alpha)} \right\|
\right).
\end{equation}
In particular (see \cite[Theorem 1]{12}),
\begin{equation}\label{inequality 3.7}
w^r(A) \leq 
\dfrac{1}{2} \left\| \, \left|A\right|^{2r\alpha} + \left| A^* \right|^{2r(1-\alpha)} \ \right\|,
\end{equation}
and
\begin{equation} \label{inequality 888}
w^r(A) \leq 
\dfrac{1}{2} \left\| \, \left|A\right|^r + \left| A^* \right|^r \ \right\|.
\end{equation}
\end{corollary}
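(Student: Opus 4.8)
The plan is to obtain all three displayed inequalities as direct specializations of the preceding theorem (inequality (\ref{inequality 3.5})), so the entire argument reduces to choosing the functions $f$, $g$, and $h$ appropriately and evaluating the resulting functional calculus.

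First I would put $h(t) = t^r$, $f(t) = t^\alpha$, and $g(t) = t^{1-\alpha}$ in (\ref{inequality 3.5}). These choices satisfy the hypotheses of the theorem: for $t \geq 0$ we have $f(t)g(t) = t^\alpha t^{1-\alpha} = t$, both $f$ and $g$ are non-negative and continuous on $[0,\infty)$, and for $r \geq 1$ the power $h(t) = t^r$ is non-negative, nondecreasing, and convex on $[0,\infty)$. With these substitutions the functional calculus gives $h(f^2(|A|)) = (|A|^{2\alpha})^r = |A|^{2r\alpha}$ and $h(g^2(|A^*|)) = (|A^*|^{2(1-\alpha)})^r = |A^*|^{2r(1-\alpha)}$, and analogously with $A$ replaced by $D$. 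Inserting these into (\ref{inequality 3.5}) and using $h(w(T)) = w(T)^r = w^r(T)$ yields inequality (\ref{inequality 3.6}) at once.

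Next, to derive (\ref{inequality 3.7}) I would specialize (\ref{inequality 3.6}) to the case $D = A$. By Lemma \ref{lemma 2.4}(b) we have $w\left(\begin{bmatrix} A & 0 \\ 0 & A \end{bmatrix}\right) = \max(w(A), w(A)) = w(A)$, so the left-hand side collapses to $w^r(A)$; at the same time the two entries of the maximum on the right coincide, reducing it to the single norm $\frac{1}{2}\||A|^{2r\alpha} + |A^*|^{2r(1-\alpha)}\|$. Finally, inequality (\ref{inequality 888}) follows from (\ref{inequality 3.7}) by the further choice $\alpha = \frac{1}{2}$, for which $2r\alpha = 2r(1-\alpha) = r$.

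There is essentially no obstacle here: each step is a routine substitution into an already-established inequality. The only points demanding the slightest attention are verifying that $t^r$ is genuinely nondecreasing and convex on $[0,\infty)$ precisely when $r \geq 1$ (so that the theorem applies), and correctly invoking Lemma \ref{lemma 2.4}(b) to collapse the block-diagonal numerical radius when passing from (\ref{inequality 3.6}) to (\ref{inequality 3.7}).
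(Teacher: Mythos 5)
Your proposal is correct and follows exactly the paper's route: inequality (\ref{inequality 3.6}) is obtained by substituting $h(t)=t^r$, $f(t)=t^\alpha$, $g(t)=t^{1-\alpha}$ into (\ref{inequality 3.5}), and the two special cases follow by setting $D=A$ (via Lemma \ref{lemma 2.4}(b)) and then $\alpha=\tfrac12$. The paper's own proof only records the first substitution explicitly, so your spelled-out derivation of (\ref{inequality 3.7}) and (\ref{inequality 888}) is a faithful completion of the intended argument.
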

\begin{proof}
Inequality (\ref{inequality 3.6}) follows from inequality
(\ref{inequality 3.5}) by putting $ h(t)= t^r $, $f(t)= t^\alpha $ and $ g(t)= t^{1-\alpha}$. 
\end{proof}
\begin{remark}
Let $ A \in\mathbb{B}(\mathcal{H})$. For all $ r \geq 1$ and 
$ \alpha \in [0, 1]$, by using inequalities 
(\ref{inequality 3.3}) and (\ref{inequality 3.7}), we get
\begin{equation*}
w^r(A) \leq \dfrac{1}{2} \min \left( 
\left\| \, \left|A\right|^{2r\alpha} + \left| A^* \right|^{2r(1-\alpha)} \ \right\|,
\left\| \, \left|A\right|^{2r\alpha} + \left| A \right|^{2r(1-\alpha)} \ \right\|\right). 
\end{equation*} 
\end{remark}
The following theorem presents a generalization of inequality (\ref{inequality 1.5}) and the second inequality in 
(\ref{inequality 1.6}).
\begin{theorem}
Let 
$ T= \begin{bmatrix}
A& 0 \\
0& D \end{bmatrix} \in\mathbb{B}(\mathcal{H} \oplus \mathcal{H})$ and let $f$ and $g$ be non-negative continuous functions on $ [0, \infty) $ such that $ f(t)g(t)=t \ (t \geq 0) $.
Then for all non-negative nondecreasing convex functions $h$ on 
$ [0, \infty) $,
{\footnotesize 
\begin{align}\label{inequality 3.8}
\nonumber & h\left( w^r(T)\right) \\
& \qquad \leq \max \left( 
\left\| \dfrac{1}{p} h\left( f^{pr}(\left| A \right| )\right) + 
\dfrac{1}{q}h\left( g^{qr}(\left| A^* \right| )\right) \right\|
,\left\|\dfrac{1}{p} h\left( f^{pr}(\left| D \right| )\right) + 
\dfrac{1}{q}h\left( g^{qr}(\left| D^* \right| )\right) \right\|
\right),
\end{align}
}
where $p, q > 1 $ with 
$\dfrac{1}{p} + \dfrac{1}{q} =1$, and $ r \min(p, q) \geq 2$.
\end{theorem}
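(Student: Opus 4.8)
The plan is to mimic the argument used for inequality (\ref{inequality 3.5}), but with a power $r$ inserted and the symmetric Young splitting $\tfrac12+\tfrac12$ replaced by the weighted one $\tfrac1p+\tfrac1q$. Fix a unit vector $x\in\mathcal{H}\oplus\mathcal{H}$. First I would apply the generalized mixed Cauchy--Schwarz inequality to $\langle Tx,x\rangle$ and then raise both sides to the power $r$, obtaining
\[
|\langle Tx,x\rangle|^{r}\le \langle f^{2}(|T|)x,x\rangle^{r/2}\,\langle g^{2}(|T^{*}|)x,x\rangle^{r/2}.
\]

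Next I would invoke the Young inequality (part (a) of the first lemma) in the form $ab\le a^{p}/p+b^{q}/q$, with $a=\langle f^{2}(|T|)x,x\rangle^{r/2}$ and $b=\langle g^{2}(|T^{*}|)x,x\rangle^{r/2}$, which converts the product into $\tfrac1p\langle f^{2}(|T|)x,x\rangle^{rp/2}+\tfrac1q\langle g^{2}(|T^{*}|)x,x\rangle^{rq/2}$. This is precisely where the hypothesis $r\min(p,q)\ge 2$ is needed: it guarantees $rp/2\ge 1$ and $rq/2\ge 1$ simultaneously, so the power form of the operator Jensen inequality (Lemma \ref{lemma 2.2}) applies to \emph{both} terms, giving $\langle f^{2}(|T|)x,x\rangle^{rp/2}\le\langle f^{pr}(|T|)x,x\rangle$ and $\langle g^{2}(|T^{*}|)x,x\rangle^{rq/2}\le\langle g^{qr}(|T^{*}|)x,x\rangle$. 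Together these yield
\[
|\langle Tx,x\rangle|^{r}\le \tfrac1p\langle f^{pr}(|T|)x,x\rangle+\tfrac1q\langle g^{qr}(|T^{*}|)x,x\rangle.
\]

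Then I would feed this into $h$. Monotonicity of $h$ bounds $h(|\langle Tx,x\rangle|^{r})$ by $h$ of the right-hand side; convexity of $h$ together with $\tfrac1p+\tfrac1q=1$ distributes $h$ over the convex combination; and a second application of the operator Jensen inequality (Lemma \ref{lemma 2.2}) pulls $h$ inside each inner product. Using that $T$ is block diagonal, so that $|T|=\mathrm{diag}(|A|,|D|)$ and $|T^{*}|=\mathrm{diag}(|A^{*}|,|D^{*}|)$ and the functional calculus acts blockwise, I arrive at $h(|\langle Tx,x\rangle|^{r})\le\langle Mx,x\rangle$, where $M$ is the positive block-diagonal operator with diagonal entries $\tfrac1p h(f^{pr}(|A|))+\tfrac1q h(g^{qr}(|A^{*}|))$ and $\tfrac1p h(f^{pr}(|D|))+\tfrac1q h(g^{qr}(|D^{*}|))$.

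Finally I would take the supremum over all unit vectors $x$. On the right, $\sup_{x}\langle Mx,x\rangle=\|M\|$, and since $M$ is block diagonal with positive blocks, $\|M\|$ is the maximum of the norms of its two diagonal blocks, which is exactly the right-hand side of the claim. On the left, the continuity and monotonicity of $h$ and the monotonicity of $t\mapsto t^{r}$ let the supremum pass through, so that $\sup_{x}h(|\langle Tx,x\rangle|^{r})=h\big((\sup_{x}|\langle Tx,x\rangle|)^{r}\big)=h(w^{r}(T))$. I expect the only delicate point to be this last interchange of the supremum with $h$ and with the $r$th power, which rests on the continuity and monotonicity of $h$; the exponent bookkeeping in the Young-plus-Jensen step is routine once one notes that $r\min(p,q)\ge 2$ is exactly the condition that licenses the two uses of Lemma \ref{lemma 2.2}.
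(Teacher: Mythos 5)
Your proposal is correct and follows essentially the same route as the paper's own proof: mixed Cauchy--Schwarz, the weighted Young inequality, two applications of the operator Jensen inequality (Lemma \ref{lemma 2.2}) licensed by $rp/2\ge 1$ and $rq/2\ge 1$, convexity of $h$, and the block-diagonal structure of $|T|$ and $|T^*|$, followed by taking the supremum. The only cosmetic difference is that you apply $h$ after establishing the scalar bound on $|\langle Tx,x\rangle|^{r}$, whereas the paper carries $h$ through the whole chain using its monotonicity; the two are equivalent.
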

\begin{proof}
Without loss of generality, we can assume that $ p\geq q$.
Let $x$ be any unit vector in 
$\mathcal{H} \oplus \mathcal{H}$. Then
\begin{align*}
& h\left( \left|\right\langle Tx, x \left\rangle \right|^r \right)\\
&\leq h \left( \left\langle f^2(\left| T \right|)x, x \right\rangle ^{\frac{r}{2}} \left\langle g^2(\left| T^* \right|)x, x \right\rangle^{\frac{r}{2}} \right) 
\tag{by the mixed Cauchy--Schwarz inequality} \\
&\leq h \left( \dfrac{1}{p} \left\langle f^2(\left| T \right|)x, x \right\rangle ^{\frac{pr}{2}} + \dfrac{1}{q}\left\langle g^2(\left| T^* \right|)x, x \right\rangle^{\frac{qr}{2}} \right)
\tag{by the Young inequality}\\
& \leq h \left( \dfrac{1}{p} \left\langle f^{pr}(\left| T \right|)x, x \right\rangle + \dfrac{1}{q}\left\langle g^{qr}(\left| T^* \right|)x, x \right\rangle \right) 
\tag{by Lemma \ref{lemma 2.2} }\\
& \leq \dfrac{1}{p} h \left( \left\langle  f^{pr}(\left| T \right|) x, x \right\rangle\right)+ \dfrac{1}{q} h \left( \left\langle g^{qr}(\left| T^* \right|) x, x \right\rangle\right)
\tag{by the convexity of \textit{h}} \\
& \leq \dfrac{1}{p} \left\langle h \left( f^{pr}(\left| T \right|)\right) x, x \right\rangle + \dfrac{1}{q}\left\langle h \left(g^{qr}(\left| T^* \right|)\right) x, x \right\rangle
\tag{by Lemma \ref{lemma 2.2}} \\
& = \left\langle 
\begin{bmatrix}
\dfrac{1}{p}h\left(f^{pr}\left(\left|A\right|\right)\right) +\dfrac{1}{q}h\left( g^{qr} \left(\left|A^*\right|\right)\right)& 0\\
0 & \dfrac{1}{p}h \left( f^{pr}(\left| D \right|)\right) +\dfrac{1}{q} h\left( g^{qr}(\left| D^* \right| )\right) 
\end{bmatrix} x,x \right\rangle. 
\end{align*}
Take the supremum over all unit vectors $x$ to reach the required result.
\end{proof}
In the next corollary, inequality (\ref{inequality 3.9}) is a generalization of the second inequality in (\ref{inequality 1.6}).
\begin{corollary}\label{corollayr 3.9}
Let $ A, D \in\mathbb{B}(\mathcal{H})$, $p, q > 1$ with 
$\dfrac{1}{p} + \dfrac{1}{q} =1$, and $ r \min(p, q) \geq 2 $. Then
{\small 
\begin{equation*}
w^{2r}\left( \begin{bmatrix}
A& 0 \\
0& D \end{bmatrix}\right)
\leq \max \left( 
\left\| \dfrac{1}{p}\left| A \right|^{2pr \alpha} + 
\dfrac{1}{q} \left| A^* \right|^{2qr(1- \alpha)} \right\|
,\left\|\dfrac{1}{p}\left| D \right|^{2pr \alpha} + 
\dfrac{1}{q} \left| D^* \right|^{2qr(1- \alpha)} \right\|
\right),
\end{equation*}
}
and in particular,
\begin{equation}\label{inequality 3.9}
w^{2r}(A) \leq \left\| \dfrac{1}{p}\left| A \right|^{2pr \alpha} + 
\dfrac{1}{q} \left| A^* \right|^{2qr(1- \alpha)} \right\|,
\end{equation}
for all 
$ \alpha \in [0, 1]$.
\end{corollary}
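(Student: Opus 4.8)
The plan is to obtain this corollary as a direct specialization of the preceding theorem, namely inequality (\ref{inequality 3.8}), so that no new analytic input is required. In that theorem the left-hand side reads $h\left(w^r(T)\right)$ and the right-hand side is a maximum of two norms built from $h\left(f^{pr}(\left|A\right|)\right)$, $h\left(g^{qr}(\left|A^*\right|)\right)$ and the analogous $D$-terms. To recover a clean power of the numerical radius with the stated exponents, I would feed in the three specific functions
\[
h(t) = t^2, \qquad f(t) = t^{\alpha}, \qquad g(t) = t^{1-\alpha},
\]
for a fixed $\alpha \in [0,1]$. Each of $f$ and $g$ is non-negative and continuous on $[0,\infty)$, the compatibility condition $f(t)g(t) = t^{\alpha}t^{1-\alpha} = t$ holds, and $h(t)=t^2$ is non-negative, nondecreasing, and convex on $[0,\infty)$, so every hypothesis of the theorem is satisfied; the constraint $r\min(p,q) \geq 2$ is simply carried over.

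Next I would compute the effect of these choices term by term. On the left, $h\left(w^r(T)\right) = \left(w^r(T)\right)^2 = w^{2r}(T)$. On the right, the functional calculus gives $f^{pr}(\left|A\right|) = \left|A\right|^{pr\alpha}$ and $g^{qr}(\left|A^*\right|) = \left|A^*\right|^{qr(1-\alpha)}$, whence $h\left(f^{pr}(\left|A\right|)\right) = \left|A\right|^{2pr\alpha}$ and $h\left(g^{qr}(\left|A^*\right|)\right) = \left|A^*\right|^{2qr(1-\alpha)}$, and identically for the $D$-block. Substituting these into inequality (\ref{inequality 3.8}) reproduces verbatim the first displayed inequality of the corollary.

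Finally, to deduce the particular case (\ref{inequality 3.9}), I would set $A = D$. By Lemma \ref{lemma 2.4}(b) we then have $w(T) = \max\left(w(A), w(A)\right) = w(A)$, so that $w^{2r}(T) = w^{2r}(A)$, while the two entries appearing in the maximum on the right-hand side coincide. The maximum therefore collapses to the single norm $\left\|\tfrac{1}{p}\left|A\right|^{2pr\alpha} + \tfrac{1}{q}\left|A^*\right|^{2qr(1-\alpha)}\right\|$, which is exactly (\ref{inequality 3.9}). There is no genuine obstacle in this argument: all the substance resides in the theorem, and the only points deserving a moment's care are verifying that $h(t)=t^2$ is admissible (non-negative, nondecreasing, and convex on $[0,\infty)$) and checking that the exponents align correctly once the functional calculus is applied.
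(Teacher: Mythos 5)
Your proposal is correct and is exactly the intended derivation: the paper states this corollary without an explicit proof, but, in line with its other corollaries, it is meant to follow from inequality (\ref{inequality 3.8}) by the very substitution you make, $h(t)=t^2$, $f(t)=t^{\alpha}$, $g(t)=t^{1-\alpha}$, with the particular case obtained via $A=D$ and Lemma \ref{lemma 2.4}(b). Your verification of the hypotheses and of the exponent bookkeeping is accurate, so nothing is missing.
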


Note that, if, in Corollary \ref{corollayr 3.9}, we take 
$ \alpha = \dfrac{1}{p}$, then
 \begin{equation*}
w^{2r}\left( \begin{bmatrix}
A& 0 \\
0& D \end{bmatrix}\right) 
\leq \max \left( 
\left\| \dfrac{1}{p}\left| A \right|^{2r} + 
\dfrac{1}{q} \left| A^* \right|^{2r} \right\|
,\left\|\dfrac{1}{p}\left| D \right|^{2r}+ 
\dfrac{1}{q} \left| D^* \right|^{2r} \right\|
\right).
\end{equation*}
In particular (see \cite[Theorem 2]{12}),
\begin{equation*}
w^{2r}(A) \leq \left\| \dfrac{1}{p}\left| A \right|^{2r} + 
\dfrac{1}{q} \left| A^* \right|^{2r} \right\|. 
\end{equation*}
In the next corollary, inequality (\ref{inequality 3.10}) is a generalization of inequality (\ref{inequality 1.5}).
\begin{corollary}\label{corollary 3.10}
Let $ A, D \in\mathbb{B}(\mathcal{H})$, $p, q > 1$ with 
$\dfrac{1}{p} + \dfrac{1}{q} =1$, and $ r \min(p, q) \geq 2 $. Then 
\begin{equation*}
w^{r}\left( \begin{bmatrix}
A& 0 \\
0& D \end{bmatrix}\right)
\leq \max \left( \left\| \dfrac{1}{p}\left| A \right|^{pr \alpha} + 
\dfrac{1}{q} \left| A^* \right|^{qr(1-\alpha)} \right\|
, \left\|\dfrac{1}{p}\left| D \right|^{p r \alpha} + 
\dfrac{1}{q} \left| D^* \right|^{qr(1-\alpha)} \right\|\right),
\end{equation*}
and in particular (see \cite[Corollary 3]{15}),
\begin{equation}\label{inequality 3.10}
w^r(A) \leq \left\| \dfrac{1}{p}\left| A \right|^{p r \alpha} + 
\dfrac{1}{q} \left| A^* \right|^{qr(1-\alpha)} \right\|,
\end{equation}
for all $ \alpha \in [0, 1]$.
\end{corollary}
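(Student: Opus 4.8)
The plan is to obtain this corollary by specializing the immediately preceding theorem, inequality (\ref{inequality 3.8}), taking the identity function in the role of $h$ and power functions in the roles of $f$ and $g$. First I would set $h(t) = t$, $f(t) = t^{\alpha}$, and $g(t) = t^{1-\alpha}$ for a fixed $\alpha \in [0, 1]$, and verify that these choices meet the hypotheses of the theorem: the identity $h(t) = t$ is non-negative, nondecreasing, and convex on $[0, \infty)$, while $f$ and $g$ are non-negative and continuous on $[0, \infty)$ and satisfy $f(t)g(t) = t^{\alpha} t^{1-\alpha} = t$ for every $t \geq 0$.

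Next I would compute the functional-calculus expressions on the right-hand side of (\ref{inequality 3.8}). Since $f^{pr}(s) = s^{pr\alpha}$ and $g^{qr}(s) = s^{qr(1-\alpha)}$, the functional calculus gives $f^{pr}(\left| A \right|) = \left| A \right|^{pr\alpha}$ and $g^{qr}(\left| A^* \right|) = \left| A^* \right|^{qr(1-\alpha)}$; because $h$ is the identity, applying $h$ leaves these operators unchanged, and the same simplification holds with $D$ in place of $A$. Substituting into (\ref{inequality 3.8}) and using $h(w^r(T)) = w^r(T)$ then yields the displayed bound for $T = \begin{bmatrix} A & 0 \\ 0 & D \end{bmatrix}$.

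To obtain the special case (\ref{inequality 3.10}), I would set $D = A$. By Lemma \ref{lemma 2.4}(b), $w\left(\begin{bmatrix} A & 0 \\ 0 & A \end{bmatrix}\right) = \max(w(A), w(A)) = w(A)$, so the left-hand side reduces to $w^r(A)$, while on the right the two arguments of the maximum coincide and collapse to a single norm. This produces inequality (\ref{inequality 3.10}).

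Because the result is a corollary obtained by direct substitution into an already-proved theorem, I do not expect a genuine obstacle. The only point requiring minor care is the exponent bookkeeping in the functional calculus, namely confirming that composing the power $t^{\alpha}$ with the outer exponent $pr$ yields exactly $pr\alpha$ (and likewise $qr(1-\alpha)$ for the $g$-term), but this is a routine verification; the hypotheses $p, q > 1$ with $\frac{1}{p} + \frac{1}{q} = 1$ and $r \min(p, q) \geq 2$ are simply inherited verbatim from the theorem.
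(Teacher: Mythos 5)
Your proposal is correct and matches the paper's intended argument: the paper states this corollary without an explicit proof, precisely because it follows by the direct substitution you describe, taking $h(t)=t$, $f(t)=t^{\alpha}$, $g(t)=t^{1-\alpha}$ in inequality (\ref{inequality 3.8}) so that $f^{pr}(\left|A\right|)=\left|A\right|^{pr\alpha}$ and $g^{qr}(\left|A^*\right|)=\left|A^*\right|^{qr(1-\alpha)}$. Your derivation of the special case (\ref{inequality 3.10}) via $D=A$ and Lemma \ref{lemma 2.4}(b) is likewise exactly the route the paper uses for its analogous corollaries.
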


Note that, if in Corollary \ref{corollary 3.10} we take 
$ \alpha= \dfrac{1}{p}$, then 
\begin{equation*}
w^{r}\left( \begin{bmatrix}
A& 0 \\
0& D \end{bmatrix}\right)
\leq \max \left( \left\| \dfrac{1}{p}\left| A \right|^r + 
\dfrac{1}{q} \left| A^* \right|^r \right\|
,\left\|\dfrac{1}{p}\left| D \right|^r +
\dfrac{1}{q} \left| D^* \right|^r \right\| \right).
\end{equation*}
In particular,
\begin{equation*}
w^r(A) \leq \left\| \dfrac{1}{p}\left| A \right|^r + 
\dfrac{1}{q} \left| A^* \right|^r \right\|,
\end{equation*}
which is a generalization of inequality (\ref{inequality 888}).\\
In the next theorem, we give a generalization of the second inequality in (\ref{inequality 1.7}).
\begin{theorem} \label{theorem 3.13}
Let 
$ T= \begin{bmatrix}
A& 0 \\
0& D \end{bmatrix} \in\mathbb{B}(\mathcal{H} \oplus \mathcal{H})$, $ r \geq 2 $, and $p, q > 1$ with $\dfrac{1}{p} + \dfrac{1}{q}=1$. If 
$f_1$, $g_1$, $f_2$, and $g_2$ are non-negative continuous functions on $[0, \infty)$ such that $f_1(t)g_1(t)=f_2(t)g_2(t)=t \ (t \geq 0)$, then 
\begin{equation}\label{inequality 3.11}
w^r(T) \leq \dfrac{1}{2}
{\max}^{\frac{1}{p}} \left(\alpha, \beta \right) 
{\max}^{\frac{1}{q}} \left(\gamma, \delta \right),
\end{equation}
and
\begin{equation}\label{inequality 3.12}
w^r(T) \leq \dfrac{1}{2}
{\max}^{\frac{1}{p}}\left(\alpha^{'}, \beta^{'}\right) 
{\max}^{\frac{1}{q}}\left(\gamma^{'},\delta^{'}\right), 
\end{equation}
where\\
{\scriptsize
$ \alpha= \left\| f_1^{rp}( \left| {\rm Re} A + {\rm Im} A \right| ) + f_2^{rp} ( \left| {\rm Re} A - {\rm Im} A\right| )\right\|, \qquad
\beta= \left\| f_1^{rp} ( \left| {\rm Re} D + {\rm Im} D \right| ) + f_2^{rp} ( \left| {\rm Re} D - {\rm Im} D \right| )\right\| $,\\
$ \gamma= \left\| g_1^{rq} ( \left| {\rm Re} A + {\rm Im} A\right| ) + g_2^{rq} (\left| {\rm Re} A - {\rm Im} A \right| )\right\|,\qquad
\delta= \left\| g_1^{rq} (\left| {\rm Re} D + {\rm Im} D\right| ) + g_2^{rq} ( \left| {\rm Re} D - {\rm Im} D\right| )\right\|,$
}\\
and \\
{\scriptsize
$ \alpha^{'}= \left\| f_1^{rp} (\left| {\rm Re} A + {\rm Im} A\right| )+ g_2^{rp} ( \left| {\rm Re} A - {\rm Im} A\right| )\right\|, \qquad
\beta^{'}= \left\| f_1^{rp} ( \left| {\rm Re} D + {\rm Im} D \right| ) + g_2^{rp} ( \left| {\rm Re} D - {\rm Im} D\right| ) \right\| $,\\
$ \gamma^{'}= \left\| g_1^{rq} (\left| {\rm Re} A + {\rm Im} A\right| )+ f_2^{rq} ( \left| {\rm Re} A - {\rm Im} A\right| )\right\|, \qquad
\delta^{'}= \left\| g_1^{rq} ( \left| {\rm Re} D + {\rm Im} D \right| ) + 
f_2^{rq} ( \left| {\rm Re} D - {\rm Im} D\right| ) \right\| $.
}
\end{theorem}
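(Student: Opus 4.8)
The plan is to follow the blueprint of the proof of Theorem~\ref{theorem 2.9}, specialised to the diagonal setting where everything becomes cleaner. First I would take the Cartesian decomposition $T=T_1+iT_2$; because $T$ is diagonal, so are its self-adjoint parts, namely $T_1=\begin{bmatrix}{\rm Re}\,A & 0\\ 0 & {\rm Re}\,D\end{bmatrix}$ and $T_2=\begin{bmatrix}{\rm Im}\,A & 0\\ 0 & {\rm Im}\,D\end{bmatrix}$. Fix a unit vector $x\in\mathcal{H}\oplus\mathcal{H}$. Since $\langle T_1x,x\rangle$ and $\langle T_2x,x\rangle$ are real, $\left|\langle Tx,x\rangle\right|^{r}=\left(\langle T_1x,x\rangle^{2}+\langle T_2x,x\rangle^{2}\right)^{r/2}$, and the identity $a^{2}+b^{2}=\tfrac12\big[(a+b)^{2}+(a-b)^{2}\big]$ rewrites this as $2^{-r/2}\left(\langle(T_1+T_2)x,x\rangle^{2}+\langle(T_1-T_2)x,x\rangle^{2}\right)^{r/2}$. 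Applying the convexity of $t\mapsto t^{r/2}$ (here $r\ge 2$ is used), followed by the convexity of $|t|$ together with Lemma~\ref{lemma 2.2}, I obtain the starting estimate
\[2\left|\langle Tx,x\rangle\right|^{r}\le\left\langle\left|T_1+T_2\right|x,x\right\rangle^{r}+\left\langle\left|T_1-T_2\right|x,x\right\rangle^{r}.\]
The decisive simplification relative to the off-diagonal case is that $T_1\pm T_2$ is diagonal, so $\left|T_1\pm T_2\right|=\begin{bmatrix}\left|{\rm Re}\,A\pm{\rm Im}\,A\right| & 0\\ 0 & \left|{\rm Re}\,D\pm{\rm Im}\,D\right|\end{bmatrix}$ with no spurious $2^{-1/2}$ factor; this is exactly why the final constant is $\tfrac12$ rather than the $2^{-r/2-1}$ of Theorem~\ref{theorem 2.9}.

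Next I would apply the generalized mixed Cauchy--Schwarz inequality to each of the positive operators $P_{+}:=\left|T_1+T_2\right|$ and $P_{-}:=\left|T_1-T_2\right|$. As $P_{\pm}\ge 0$ we have $\left|P_{\pm}\right|=\left|P_{\pm}^{*}\right|=P_{\pm}$, so with the factorisations $f_1g_1=t$ for $P_{+}$ and $f_2g_2=t$ for $P_{-}$ the lemma gives $\langle P_{+}x,x\rangle\le\langle f_1^{2}(P_{+})x,x\rangle^{1/2}\langle g_1^{2}(P_{+})x,x\rangle^{1/2}$, and similarly for $P_{-}$. Raising to the power $r$ and using Lemma~\ref{lemma 2.2} (with exponent $r/2\ge 1$) to absorb each square turns the previous bound into
\[2\left|\langle Tx,x\rangle\right|^{r}\le\langle f_1^{r}(P_{+})x,x\rangle\langle g_1^{r}(P_{+})x,x\rangle+\langle f_2^{r}(P_{-})x,x\rangle\langle g_2^{r}(P_{-})x,x\rangle.\]
Then H\"older's inequality (part (b) of the first lemma) separates the two products into a $p$-factor and a $q$-factor, and a second use of Lemma~\ref{lemma 2.2} (now with exponents $p,q\ge 1$) promotes $f_1^{r}\mapsto f_1^{rp}$, $g_1^{r}\mapsto g_1^{rq}$, and so on. Collecting the diagonal blocks, the right-hand side becomes $\langle M_1x,x\rangle^{1/p}\langle M_2x,x\rangle^{1/q}$, where $M_1=f_1^{rp}(P_{+})+f_2^{rp}(P_{-})$ and $M_2=g_1^{rq}(P_{+})+g_2^{rq}(P_{-})$ are positive and block-diagonal.

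To finish, I would note that for every unit $x$ one has $\langle M_1x,x\rangle\le\|M_1\|=\max(\alpha,\beta)$ and $\langle M_2x,x\rangle\le\|M_2\|=\max(\gamma,\delta)$, the norm of a positive block-diagonal operator being the maximum of the norms of its blocks. Bounding each factor by its own supremum and then taking the supremum over $x$, while recalling that the supremum of $\left|\langle Tx,x\rangle\right|^{r}$ over unit vectors equals $w^{r}(T)$, yields $2w^{r}(T)\le\max^{1/p}(\alpha,\beta)\max^{1/q}(\gamma,\delta)$, which is (\ref{inequality 3.11}). For the companion bound (\ref{inequality 3.12}) I would repeat the argument verbatim, but in the H\"older step assign the factor $\langle g_2^{r}(P_{-})x,x\rangle$ to the $p$-group and $\langle f_2^{r}(P_{-})x,x\rangle$ to the $q$-group (legitimate since $f_2g_2=t$ as well); this interchange replaces $M_1,M_2$ by $f_1^{rp}(P_{+})+g_2^{rp}(P_{-})$ and $g_1^{rq}(P_{+})+f_2^{rq}(P_{-})$, producing the primed norms $\alpha',\beta',\gamma',\delta'$.

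The step most prone to error is the exponent-and-slot bookkeeping in the combined Jensen/mixed-Cauchy--Schwarz/H\"older chain: one must keep the absorption exponents $r/2\ge 1$ (supplied by $r\ge 2$) and $p,q\ge 1$ (supplied by $p,q>1$) straight, and verify that the four functions land in the correct positions of $\alpha,\beta,\gamma,\delta$ and their primed counterparts. The only genuinely structural input---and the source of the cleaner constant---is the observation that $\left|T_1\pm T_2\right|$ remains block-diagonal with entries $\left|{\rm Re}\,A\pm{\rm Im}\,A\right|$ and $\left|{\rm Re}\,D\pm{\rm Im}\,D\right|$; once this is recorded, the rest is the same machinery already deployed in Theorem~\ref{theorem 2.9}.
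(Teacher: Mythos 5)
Your proposal is correct and follows essentially the same route as the paper: Cartesian decomposition, the identity $a^2+b^2=\tfrac12[(a+b)^2+(a-b)^2]$, convexity of $t^{r/2}$ and $|t|$ with Lemma \ref{lemma 2.2}, the mixed Cauchy--Schwarz inequality applied to $|T_1\pm T_2|$, then H\"older plus Jensen to reach the block-diagonal operators whose norms are $\max(\alpha,\beta)$ and $\max(\gamma,\delta)$, with the swap of $f_2,g_2$ between the $p$- and $q$-groups giving the primed version. Your added remarks --- that $|T_1\pm T_2|$ stays block-diagonal with entries $|{\rm Re}\,A\pm{\rm Im}\,A|$, $|{\rm Re}\,D\pm{\rm Im}\,D|$ (explaining the constant $\tfrac12$ versus $2^{-r/2-1}$) and the explicit bound $\langle M_ix,x\rangle\le\|M_i\|$ before taking the supremum --- are accurate and only make explicit what the paper leaves implicit.
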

\begin{proof}
Assume that $ T = T_1 + iT_2 $ is the Cartesian decomposition of $T$ and that $x$ is any unit vector in 
$\mathcal{H} \oplus \mathcal{H}$. Then
\begin{align*}
\left| \left\langle Tx, x\right\rangle\right|^r 
& = \left|\left\langle \left( T_1 + iT_2\right)x, x \right\rangle \right|^r \\ 
&=\left(\left\langle T_1 x, x \right\rangle^2 + \left\langle T_2 x,x \right\rangle^2 \right)^{\frac{r}{2}} \\
&= 2^{-\frac{r}{2}}\left(\left\langle(T_1 + T_2)x, x\right\rangle^2 + \left\langle (T_1 - T_2)x,x\right\rangle^2 \right)^{\frac{r}{2}} \\
&\leq 2^{-\frac{r}{2}} 2^{\frac{r}{2}-1}\left(\left| \left\langle(T_1 + T_2)x, x \right\rangle \right| ^r +\left|\left\langle (T_1 - T_2)x,x \right\rangle \right| ^r \right)
\tag{by the convexity of $ t^{\frac{r}{2}}$ for $r \geq 2$} \\
& \leq\frac{1}{2}\left( \left\langle \left| T_1 + T_2 \right| x, x \right\rangle ^r +  \left\langle \left| T_1 - T_2 \right| x, x \right\rangle ^r \right) \tag{by the convexity of $ \left|t \right|$} \\
&\leq\frac{1}{2} \left( \left\langle f_1^2(\left| T_1 + T_2\right|) x, x\right\rangle^{\frac{r}{2}} 
\left\langle g_1^2(\left| T_1 + T_2\right|) x, x\right\rangle^{\frac{r}{2}}\right)\\
& \quad +\frac{1}{2} \left( \left\langle f_2^2(\left| T_1 - T_2\right| )x, x\right\rangle^{\frac{r}{2}} 
\left\langle g_2^2(\left| T_1 - T_2\right|) x, x\right\rangle^{\frac{r}{2}}
\right) \tag{by the mixed Cauchy--Schwarz inequality}\\
&\leq\frac{1}{2} \left( \left\langle f_1^r(\left|T_1 + T_2\right|) x, x\right\rangle
\left\langle g_1^r(\left| T_1 + T_2\right|) x, x\right\rangle\right)\\
& \quad +\frac{1}{2} \left( \left\langle f_2^r(\left| T_1 - T_2\right| )x, x\right\rangle
\left\langle g_2^r(\left| T_1 - T_2\right|) x, x\right\rangle
\right) 
\tag{by Lemma \ref{lemma 2.2}}\\
&\leq \frac{1}{2} \left(\left\langle f_1^{rp}\left(\left|T_1 + T_2\right| \right)x,x \right\rangle 
+\left\langle f_2^{rp}\left(\left|T_1 - T_2\right|\right)x,x \right\rangle 
\right)^{\frac{1}{p}}\\
& \quad \times \left(\left\langle g_1^{rq}\left( \left|T_1 + T_2\right|\right)x,x \right\rangle 
+\left\langle g_2^{rq}\left( \left|T_1 - T_2\right|\right)x,x \right\rangle 
\right)^{\frac{1}{q}}.\tag{by the H\"{o}lder inequality and Lemma \ref{lemma 2.2}} 
\end{align*}
Therefore,
{\scriptsize 
\begin{align*}
& \left| \left\langle Tx, x\right\rangle\right|^r \\
& \leq \dfrac{1}{2}\left\langle \begin{bmatrix}
f_1^{rp}( \left| {\rm Re} A + {\rm Im} A\right| ) + f_2^{rp}(\left| {\rm Re} A - {\rm Im} A\right| )& 0 \\
0& f_1^{rp}( \left| {\rm Re} D + {\rm Im} D\right| )+ f_2^{rp}( \left| {\rm Re} D - {\rm Im} D\right| )
\end{bmatrix} x, x \right\rangle^{\frac{1}{p}}\\
& \quad \times \left\langle \begin{bmatrix}
g_1^{rq}( \left| {\rm Re} A + {\rm Im} A\right| ) + g_2^{rq}( \left| {\rm Re} A - {\rm Im} A\right| )& 0 \\
0& g_1^{rq}(\left| {\rm Re} D + {\rm Im} D\right| )+ g_2^{rq}(\left| {\rm Re} D - {\rm Im} D\right| )
\end{bmatrix} x, x \right\rangle^{\frac{1}{q}}.
\end{align*}
}
Take the supremum over all unit vectors $x$ to get inequality (\ref{inequality 3.11}). Inequality
(\ref{inequality 3.12}) is obtained by a similar reasoning.
\end{proof}
\begin{corollary}
Let $ A \in\mathbb{B}(\mathcal{H})$ with the Cartesian decomposition $ A= B+iC $, and let $ r \geq 2 $. With the assumptions of Theorem \ref{theorem 3.13}, 
{\small 
\begin{equation*}
w^r(A) \leq \dfrac{1}{2} \left\|f_1^{rp}\left(\left|B+C\right| \right)+ f_2^{rp}\left(\left|B-C\right|\right)\right\|^{\frac{1}{p}}
\left\|g_1^{rq}\left(\left|B+C\right|\right)+ g_2^{rq}\left(\left|B-C\right|\right)\right\|^{\frac{1}{q}}
\end{equation*}
}
and
{\small 
\begin{equation}\label{inequality 3.13}
w^r(A) \leq \dfrac{1}{2} \left\|f_1^{rp}\left(\left|B+C\right| \right)+ g_2^{rp}\left(\left|B-C\right|\right)\right\|^{\frac{1}{p}}
\left\|g_1^{rq}\left(\left|B+C\right|\right)+ f_2^{rq}\left(\left|B-C\right|\right)\right\|^{\frac{1}{q}}, 
\end{equation}
}
where $p, q > 1$ with $\dfrac{1}{p} + \dfrac{1}{q}=1$.
\end{corollary}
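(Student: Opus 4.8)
The plan is to obtain this corollary as a direct specialization of Theorem \ref{theorem 3.13}, taking $D = A$. With this choice, $T = \begin{bmatrix} A & 0 \\ 0 & A \end{bmatrix}$, and Lemma \ref{lemma 2.4}(b) gives $w(T) = \max(w(A), w(A)) = w(A)$, so that $w^r(T) = w^r(A)$. This immediately converts the left-hand sides of inequalities (\ref{inequality 3.11}) and (\ref{inequality 3.12}) into $w^r(A)$.

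The next step is to trace what happens to the eight quantities $\alpha, \beta, \gamma, \delta, \alpha', \beta', \gamma', \delta'$ when $D = A$. Writing $B = \mathrm{Re}\,A$ and $C = \mathrm{Im}\,A$, the Cartesian data for the second diagonal block coincide with those of the first: $\mathrm{Re}\,D = \mathrm{Re}\,A = B$ and $\mathrm{Im}\,D = \mathrm{Im}\,A = C$. Consequently $\mathrm{Re}\,D + \mathrm{Im}\,D = B + C$ and $\mathrm{Re}\,D - \mathrm{Im}\,D = B - C$, which are exactly the expressions already appearing in $\alpha$ and $\gamma$. Hence $\beta = \alpha$, $\delta = \gamma$, and likewise $\beta' = \alpha'$, $\delta' = \gamma'$. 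Each maximum therefore collapses to its single repeated argument: ${\max}^{1/p}(\alpha, \beta) = \alpha^{1/p}$, ${\max}^{1/q}(\gamma, \delta) = \gamma^{1/q}$, and similarly in the primed case.

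Substituting these identifications into (\ref{inequality 3.11}) yields
\[
w^r(A) \leq \tfrac{1}{2}\,\bigl\| f_1^{rp}(|B+C|) + f_2^{rp}(|B-C|)\bigr\|^{\frac{1}{p}}\,\bigl\| g_1^{rq}(|B+C|) + g_2^{rq}(|B-C|)\bigr\|^{\frac{1}{q}},
\]
which is the first asserted inequality, and substituting into (\ref{inequality 3.12}) produces (\ref{inequality 3.13}) in the same way, with $g_2$ and $f_2$ interchanged in the two factors. I expect no genuine obstacle here: the entire argument is the bookkeeping of setting $D = A$ and recognizing that the two entries in each maximum become equal. The only point requiring a moment of care is confirming that the hypotheses of Theorem \ref{theorem 3.13} transfer verbatim --- namely that $r \geq 2$, that $p, q > 1$ with $\tfrac{1}{p} + \tfrac{1}{q} = 1$, and that $f_1 g_1 = f_2 g_2 = \mathrm{id}$ on $[0,\infty)$ --- all of which are precisely the standing assumptions carried over into the corollary.
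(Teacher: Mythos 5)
Your proposal is correct and matches the paper's (implicit) intent: the corollary is exactly the specialization $D=A$ of Theorem \ref{theorem 3.13}, with Lemma \ref{lemma 2.4}(b) collapsing $w(T)$ to $w(A)$ and the two entries of each maximum coinciding. Nothing further is needed.
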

\begin{corollary}
Let $ A \in\mathbb{B}(\mathcal{H})$ with the Cartesian decomposition $ A= B+iC $. Then for all $ \alpha \in [0, 1]$, $ r \geq 2 $ and $p, q > 1$ with $\dfrac{1}{p} + \dfrac{1}{q}=1$,
{\small 
\begin{equation}\label{inequality 3.15}
w^r(A) \leq \dfrac{1}{2} \left\| \left| B+C\right|^{r p \alpha} +
\left| B-C\right|^{r p (1- \alpha)} \right\|^{\frac{1}{p}} 
\left\| \left| B+C\right|^{r q \alpha} 
+ \left|B-C\right|^{r q (1- \alpha)} \right\|^{\frac{1}{q}}, 
\end{equation}
}
which is a generalization of the second inequality in 
(\ref{inequality 1.7}).
\end{corollary}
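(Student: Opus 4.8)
The plan is to obtain (\ref{inequality 3.15}) as the power-function specialization of the functional inequality (\ref{inequality 3.13}), in exactly the same way that (\ref{inequality 2.2}), (\ref{inequality 3.2}) and (\ref{inequality 3.6}) were read off from their functional parents via the substitution $h(t)=t^{r}$, $f(t)=t^{\alpha}$, $g(t)=t^{1-\alpha}$. Since (\ref{inequality 3.13}) is valid for \emph{every} admissible quadruple $f_{1},g_{1},f_{2},g_{2}$ (the sole restriction being $f_{1}(t)g_{1}(t)=f_{2}(t)g_{2}(t)=t$), the whole argument reduces to choosing these four functions as suitable monomials and simplifying $f_{i}^{rp}$ and $g_{i}^{rq}$ through the functional calculus; no analytic input is needed beyond what is already packaged into (\ref{inequality 3.13}).

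Concretely, I would set $f_{1}(t)=t^{\alpha}$ and $g_{2}(t)=t^{1-\alpha}$, which turns the first factor of (\ref{inequality 3.13}) into $\left\| \left|B+C\right|^{rp\alpha}+\left|B-C\right|^{rp(1-\alpha)}\right\|^{1/p}$, matching the first factor of (\ref{inequality 3.15}). The hypotheses $r\geq 2$ and $p,q>1$ guarantee that the intermediate exponents $\tfrac{rp}{2},\tfrac{rq}{2}>1$ invoked in the Jensen step (Lemma \ref{lemma 2.2}) that produced (\ref{inequality 3.13}) are legitimate, and the prefactor $\tfrac12$ is carried through unchanged. As a normalization check, specializing to $p=q=2$, $\alpha=\tfrac12$ collapses both factors to $\left\|\left|B+C\right|^{r}+\left|B-C\right|^{r}\right\|^{1/2}$ and recovers the second inequality in (\ref{inequality 1.7}), confirming that (\ref{inequality 3.15}) is the advertised generalization.

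The step I expect to be the genuine obstacle is the exponent bookkeeping in the \emph{second} factor. The mixed Cauchy--Schwarz inequality underlying (\ref{inequality 3.13}) pairs each $f_{i}$ with its partner $g_{i}$ through $f_{i}(t)g_{i}(t)=t$, so the monomial degrees in the $p$-factor and the $q$-factor are not independent: once $f_{1}(t)=t^{\alpha}$ is fixed, the companion $g_{1}(t)=t^{1-\alpha}$ is forced, and symmetrically $f_{2}(t)=t^{\alpha}$ is dictated by $g_{2}(t)=t^{1-\alpha}$. Carrying these through, the second factor emerges with the \emph{complementary} powers $\left\|\left|B+C\right|^{rq(1-\alpha)}+\left|B-C\right|^{rq\alpha}\right\|^{1/q}$, i.e.\ with the roles of $\alpha$ and $1-\alpha$ interchanged relative to the form displayed in (\ref{inequality 3.15}). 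Reconciling the two is where I would concentrate the care: the symmetry $w(A)=w(A^{*})$, under which $\mathrm{Re}\,A\pm\mathrm{Im}\,A=B\pm C$ is sent to $B\mp C$ and hence $\left|B+C\right|$ and $\left|B-C\right|$ are interchanged while $w^{r}(A)$ is unchanged, swaps \emph{both} factors at once and so yields only a companion bound rather than (\ref{inequality 3.15}) verbatim. The crux is therefore that one cannot make both factors display $\alpha$ on $\left|B+C\right|$ simultaneously, and the precise arrangement of degrees claimed in (\ref{inequality 3.15}) must be traced carefully against this coupling.
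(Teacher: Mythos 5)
Your route is exactly the paper's: the entire proof in the text is the one line ``take $f_1(t)=f_2(t)=t^{\alpha}$ and $g_1(t)=g_2(t)=t^{1-\alpha}$ in (\ref{inequality 3.13})'', which is the same substitution you arrive at (your choice $f_1(t)=t^{\alpha}$, $g_2(t)=t^{1-\alpha}$ forces $g_1(t)=t^{1-\alpha}$, $f_2(t)=t^{\alpha}$, i.e.\ the identical quadruple). Moreover, the scruple you raise about the second factor is warranted and is not something you are missing: carrying this substitution through (\ref{inequality 3.13}) literally gives
\[
w^r(A)\leq \dfrac{1}{2}\left\| \left| B+C\right|^{rp\alpha}+\left| B-C\right|^{rp(1-\alpha)}\right\|^{\frac{1}{p}}\left\| \left| B+C\right|^{rq(1-\alpha)}+\left| B-C\right|^{rq\alpha}\right\|^{\frac{1}{q}},
\]
with the \emph{complementary} exponents in the $q$-factor, and this is all the paper's argument actually proves. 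The display (\ref{inequality 3.15}) as printed, with $\alpha$ sitting on $\left|B+C\right|$ in both factors, does not follow from (\ref{inequality 3.13}) for $\alpha\neq\tfrac12$, because (as you correctly observe) the constraint $f_ig_i(t)=t$ couples the two factors, and the symmetry $w(A)=w(A^*)$ interchanges $\left|B+C\right|$ and $\left|B-C\right|$ in both factors simultaneously, so it produces only the companion bound, not (\ref{inequality 3.15}) verbatim. In short: your derivation is the paper's derivation, your exponent bookkeeping is the correct one, and the printed form of (\ref{inequality 3.15}) should be read with the exponents of the second factor transposed; the advertised specialization to the second inequality in (\ref{inequality 1.7}) is unaffected since there $p=q=2$ and $\alpha=\tfrac12$ make the two versions coincide.
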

\begin{proof}
Take $ f_1(t)=f_2(t)= t^{\alpha} $, and $ g_1(t)=g_2(t)=t^{1- \alpha} $ in inequality (\ref{inequality 3.13}).
The second inequality in (\ref{inequality 1.7}) 
follows from inequality (\ref{inequality 3.15}) by putting $p=q =2$ and $ \alpha = \dfrac{1}{2}$.
\end{proof}
\begin{remark}
We end our work by mentioning that all inequalities in this paper are sharp. This fact comes from the sharpness of the second inequality of (\ref{inequality 1.2}). For example, if in Theorem \ref{theorem 2.5}, we take $h(t)= t$, $f(t)=g(t)= \sqrt{t} \ (t\geq 0)$ and $ B=C$, then we get
\begin{equation*}
\left( w(B)= \right)  w\left(\begin{bmatrix}
0& B \\
B& 0 \end{bmatrix} \right) \leq  \left\| B \right\|.
\end{equation*}
Also, if in Theorem \ref{theorem 2.9} we choose $ f_1(t) = f_2(t) = g_1(t) = g_2(t) = \sqrt{t} \ (t\geq0)$ , $r=2$ , $ p= q = 2$ and $ C=B=B^* $, we obtain 
\begin{equation*}
\left( w^2(B)= \right)  w^2\left(\begin{bmatrix}
0& B \\
B& 0 \end{bmatrix} \right) \leq  \left\| B \right\|^2.
\end{equation*}
The sharpness of the other inequalities is handled in the same manner.
\end{remark}

\end{document}